  \pgfplotsset{compat=newest}
\newtheorem{remark}{Remark}[section]
\newtheorem{define}{Definition}[section]
\title{Surface Crouzeix-Raviart  element for the Laplace-Beltrami equation}
\author{ Hailong Guo\thanks{School of Mathematics and Statistics,  The University of Melbourne,  Parkville, VIC 3010, Australia   (hailong.guo@unimelb.edu.au). 
This work was partially supported by Andrew Sisson Fund of the University of Melbourne. }}
\begin{document}

\maketitle

%\medskip
%
%\begin{center}
%
%\end{center}
\medskip

\begin{abstract}
This paper is concerned with the nonconforming finite element discretization of geometric partial differential equations. 
In specific, we construct a surface Crouzeix-Raviart element on the linear approximated surface, analogous to a flat surface. 
The optimal error estimations are established even though the presentation of the geometric error.  
By taking the intrinsic viewpoint of manifolds, we introduce a new superconvergent gradient recovery method for the surface Crouzeix-Raviart element using only the information of discretization surface.  
  The potential of serving as an asymptotically exact {\it a posteriori} error estimator is also exploited.  
A series of benchmark numerical examples are presented to validate the theoretical results and numerically demonstrate the superconvergence of the gradient recovery method.

\vskip .7cm
{\bf AMS subject classifications.} \ {Primary 65N30; Secondary 45N08}

\vskip .3cm

{\bf Key words.} \ {Laplace-Beltrami operator, nonconforming, surface finite element method, Crouzeix-Raviart element, gradient recovery, superconvergence.}
\end{abstract}

%%%%%%%%%%%%%%%%%%%%%%%%%%%%%%%%%%%%%%%%%%%%%%%%%%%%%%%%%%%%%%%%%%%%%%
\section{Introduction}
Numerical methods for approximating partial differential equations (PDEs) with solutions defined on surfaces are of growing interests over
the last decades.  Since the pioneer work of Dziuk \cite{Dz1988},   there is  tremendous development on   finite element 
methods\cite{BHLLM2018, ADSS2015, DMS2013, De2009, DD2007, DE2013, GR2016, LL2017,ORG2009,OS2016, CD2016}.  Fluid equations on manifolds have many important applications 
in  fluidic biomembranes\cite{AD2009, BGN2015}, computer graphics\cite{ETKSD2007, MCPTD2009},  geophysics\cite{PRPV2017, STY2015}. Typically,  numerical simulation of surface Stokes or Navier-Stokes equations is unavoidable.  In the literature, there are several
works on them, for example, see \cite{Fr2018, RV2018, Re2018, BGN2016, OQRY2018}.  It is well known that linear surface element is not a stable pair for surface Stokes equations \cite{BF1991}. 
One can fix it by adding a stabilizing term\cite{RV2018, OQRY2018} or using Taylor-Hood element \cite{Fr2018}.

In the planar domain case,  a simpler way to overcome this difficulty is to use the Crouzeix-Raviart element.  The Crouzeix-Raviart element was firstly proposed by Crouzeix and Raviart in \cite{CR1973} to solve a steady Stokes equation. 
Different from the Courant element, such element is only continuous at edge centers of a triangulation. 
In that sense, it is a nonconforming element.  In addition to being used to construct a simple stable finite element pair for Stoke problems, the method is also proven to be locking free for Lame problems\cite{BS1992}. It can be viewed as a universal element for solids, fluids, and electromagnetic, 
see the recent review paper \cite{Br2014} and the references therein.

Our first purpose is to extend this exotic nonconforming element to a surface setting.  
Compared with the counterpart in the flat space,  there is an additional geometric error due to the discretization of the surface. 
One of the main difficulties is to estimate the nonconforming error. 
The key ingredient of this step is to conduct all the error analysis on the discretized surface instead of on the exact surface.
It should be pointed out that, in general, two triangles sharing a common edge are not on the same plane. 
The standard argument \cite{Br2007, Ci2002} for nonconforming finite element method cannot be applied directly 
and the nonconforming error is coupled together with the geometric error.  By carefully using the geometric approximation properties, 
we show that the geometric error has no impact on the overall convergence results.

Our second purpose is to propose a superconvergent post-processing technique for the surface Crouzeix-Raviart element. 
On the planar domain case, there are several post-processing techniques\cite{CB2002, GZ2015} for the Crouzeix-Raviart element. In particular, Guo and Zhang employed a local least-squares fitting procedure at every edge center to generate a more accurate approximate gradient.   The most straightforward way
of generalizing such idea to a surface setting is to project a local patch onto its tangent plane as in \cite{WCH2010}.  However, there are two barriers to the surface Crouzeix-Raviart element: 
first, it requires the exact normal vectors; second, it requires the edge centers located on the exact surface. Those two difficulties can be alleviated by going back to the original definition of the covariant derivative as in \cite{DG2017}.   In specific, we firstly adopt a least-squares procedure to recover the local parametric map and then employ another least-squares fitting on the parameter domain.  Based on the gradient recovery method, 
we introduce a recovery-type {\it a posteriori} error estimator for the surface Crouzeix-Raviart element. 

The rest of the paper is organized as the follows.  In section 2, we give a brief introduction to some preliminary knowledge on the tangential derivative and an exemplary model problem.    In section 3,  we introduce the discretized surface and present the surface Crouzeix-Raviart element. 
Section 4 is devoted to the analysis of discrete energy error and $L^2$ error on the discrete surface.  In section 5, we propose a superconvergent post-processing technique.  A series of benchmark numerical examples are presented to support our theoretical finding in Section 6.
Some conclusions are drawn in Section 7.

\section{Preliminary}
\subsection{Notation}
In    the  paper, we shall consider  $\Gamma$ is an oriented, connected, $C^{\infty}$ smooth regular surface in $\mathbb{R}^3$ without boundary. 
The sign distance function of $\Gamma$ is denoted by $d(x)$.
 Let $\nabla$ be the standard gradient operator in $\mathbb{R}^3$.
 Then the unit outward-pointing normal vector is 
$n(x) = \nabla d(x)$ and  the Weingarten map is $\mathbf{H}(x) = \nabla n(x)= \nabla^2 d(x)$.

Let $U = \{ x\in \mathbb{R}^3: \text{dist} (x, \Gamma)  \le \delta\}$ be a strip  neighborhood around $\Gamma$ with distance $\delta$
where $\text{dist} (x, \Gamma)$ is the Euclidean distance between $x$ and $\Gamma$.   Assume $\delta$ is small enough such 
that there exists a unique projection $p(x):  U\rightarrow \Gamma$ in the form of 
 \begin{equation}
p(x) = x - d(x) n(p(x)).
\end{equation}

 Let $P = Id - n \otimes n$ be the tangential projection operator where $\otimes$ is the tensor product.  The tangential gradient  
 of a scalar function  $v$ on $\Gamma$ is defined to be 
 \begin{equation}
\nabla_{\Gamma} u = P \nabla u = \nabla u - (\nabla u \cdot n)n.
\end{equation}
For a vector field $w\in \mathbb{R}^3$,  the tangential divergence is 
\begin{equation}
\mbox{div}_{\Gamma} w = \nabla_{\Gamma} \cdot w  = \nabla \cdot w - n^t\nabla w n
\end{equation}
The Laplace-Beltrami operator $\Delta_{\Gamma}$ is just 
 the tangential divergence of the tangential gradient, i.e.
 \begin{equation}
\Delta_{\Gamma} v = \mbox{div}_{\Gamma} \nabla_{\Gamma}v = \Delta v - (\nabla v \cdot n) (\nabla \cdot n) - n^t\nabla^2 vn.
\end{equation}

 Let $\alpha = (\alpha_1, \alpha_2, \alpha_3)$ 
be the $3$-index and $|\alpha| =\sum_{i=1}^3 \alpha_i$ with $\alpha_i$ being a nonnegative integer.   
Let $D^{\alpha}_{\Gamma}$ be the $|\alpha|$th order tangential derivative.
Assume $\omega$ being a subset of $\Gamma$ and $m$ being a nonnegative integer.
The Sobolev space $H^m(\omega)$  on $\omega$ \cite{Wl1987} is defined as  
 \begin{equation}
H^m(\omega) = \left\{ v\in L^2(\omega) | D^{\alpha}_{\Gamma} v\in L^2(\omega), |\alpha| \le m\right\},
\end{equation}
with norm 
\begin{equation}
\|v\|_{H^m(\omega)} = \left( \sum_{\alpha\le m} \|D^{\alpha}_{\Gamma}\|^2_{L^2(\omega)}\right)^{1/2},
\end{equation}
and semi-norm
\begin{equation}
|v|_{H^m(\omega)} = \left( \sum_{\alpha = m} \|D^{\alpha}_{\Gamma}\|^2_{L^2(\omega)}\right)^{1/2}.
\end{equation}

Throughout this article,  we use $x\lesssim y$ to denote $x\le Cy$  where the letter $C$  denotes a generic constant which is independent of h and may not be the same at each occurrence. 

\subsection{Model problem} 
In this paper, we shall consider the following model Laplace-Beltrami equation
\begin{equation}\label{equ:model}
-\Delta_{\Gamma} u  + u= f,
\end{equation}
for a given $f\in L^2(\Gamma)$. 

The variational formulation of \eqref{equ:model} is to find $u\in H^1(\Gamma)$ such that 
\begin{equation}\label{equ:var}
a(u, v) = \ell(v), \quad \forall v \in H^1(\Gamma)
\end{equation}
where 
\begin{equation}\label{equ:bilinear}
a(w, v) = (\nabla_{\Gamma}w, \nabla_{\Gamma}v)+(w,v),
\end{equation}
and 
\begin{equation}\label{equ:lf}
\ell(v) = (f, w),
\end{equation}
with $(\cdot, \cdot)$ being the standard $L^2$ inner product on $\Gamma$. 
The Lax-Milligram theorem implies \eqref{equ:var} has a unique solution and there holds the following 
regularity  \cite{Aubin1982}
\begin{equation} \label{equ:reg}
\|u\|_{H^2(\Gamma)} \lesssim \|f\|_{L^2(\Gamma)}.
\end{equation}

\section{The nonconforming finite element method}
\subsection{Approximate surface}
Suppose $\Gamma_h $ is a polyhedral approximation of $\Gamma$ with planar triangular surface. 
Let $\mathcal{T}_h$ be the associated mesh of $\Gamma_h$ and $h= \max_{T\in \mathcal{T}_h}\mbox{diam}(T) $
be its maximum diameter.   Furthermore,  we assume the mesh $\mathcal{T}_h$ is sharp regular and quasi-uniform 
triangulation \cite{Br2007, BS2008, Ci2002} and all vertices lie on $\Gamma$. 
 Let  $\mathcal{E}_h$ be the set of all edges of triangular faces in $\mathcal{T}_h$.   For any edge $E\in\mathcal{E}_h$, let $m_E$ be the middle point of edge $E$. 
The set of all edge middle points of $\mathcal{T}_h$ is denoted by $\mathcal{M}_h$.   
For any $T\in \mathcal{T}_h$. let $n_h$ be the unit outer normal vector to $\Gamma_h$ on $T$. 
The projection onto the tangent space of $\Gamma_h$ can be defined
\begin{equation}
P_h = Id - n_h\otimes n_h.
\end{equation}
Similarly, for a scalar function $v$ on $\Gamma_h$,  we can define its tangential gradient
as 
\begin{equation}
\nabla_{\Gamma_h}v = P_h \nabla v,
\end{equation}
and the Laplace-Beltrami operator  on $\Gamma_h$  as
\begin{equation}
\Delta_{\Gamma_h}v = \nabla_{\Gamma_h}\cdot \nabla_{\Gamma_h}v_h
\end{equation}

Recall that $p(x)$ is a projection map from $U$ to $\Gamma$.  For any $T\in \mathcal{T}_h$, let $T^l = p(T)$ be the curved triangular face
on $\Gamma$.  Denote the set of all curved triangular faces by  $\mathcal{T}_h^l$ , i.e. $\mathcal{T}_h^l = \left\{ T^l: T\in \mathcal{T}_h \right\}$.
Then $\mathcal{T}_h^l$ forms a conforming triangulation of $\Gamma$ such that
\begin{equation}
\Gamma = \bigcup_{T^l\in \mathcal{T}_h^l} T^l. 
\end{equation}

For any edge $E\in \mathcal{E}_h$,  there exists two triangles $T^+$ and $T^-$ such that $E = \partial T^+ \cap \partial T^-$.
The projection on $T^+$ and $T^-$ are denoted by $P_h^+$ and $P_h^-$.  Also, we use the notation $\nabla_{\Gamma_h}^+ = P_h^+\nabla$
to denote the tangent gradient in $T^+$. Similar notation is adopted in $T^-$. 
The conormal of $E$ to $T^+$, which is denoted by $n_E^+$,  is the unit outward vector of $E$  in the tangent plane of $T$ .
Similarly, let $n_E^-$ be the conormal of $E$ to $T^-$.  Analogously, on the curved edge $E^l = p(E)$, we denote 
its conormals by $n_{E^l}^{\pm}$.  Note that $n_{E^l}^+ = - n_{E^l}^-$. 
The unit outer normals of $\Gamma_h$ on $T^+$ and  $T^-$ are denoted by $n_h^+$ and $n_h^-$, respectively. 
Then it easy see that $n_h^+ \perp n_E^+$ and $n_h^- \perp n_E^-$.  We define the jump of  a function $v_h$ 
across $E$ by 
\begin{equation}\label{equ:jump}
\llbracket v \rrbracket  = \lim_{s\rightarrow 0_+ }\left( v(x-s n_E^+) -  v(x-s n_E^-) \right).
\end{equation}

\subsection{The surface Crouzeix-Raviart finite element method}

The surface Crouzeix-Raviart finite element space on $\mathcal{T}_h$  is defined to be
\begin{equation}
V_h = \left\{ v_h \in L^2(\Gamma_h): v_h|_T \in \mathbb{P}^1(T) \text{ and } v_h \text{ is continuous at } \mathcal{M}_h \right\},
\end{equation}
where $\mathbb{P}_1(T)$  is the set of linear polynomials on $T$.  By the definition of jump  \eqref{equ:jump} and 
the midpoint rule,  a piecewise linear function $v$ is in $V_h$  if and only if 
\begin{equation}
\int_E \llbracket v \rrbracket d\sigma_h = 0.
\end{equation}

To simplify the notation, we firstly define  a discrete  bilinear form $a_h(\cdot, \cdot)$ on $V_h\times V_h$ as 
\begin{equation}
a_h(w_v, v_h) = \sum_{T\in\mathcal{T}_h} \int_T \nabla_{\Gamma_h}w_h\cdot  \nabla_{\Gamma_h}w_h\cdot ds_h 
+ (w_h, v_h)_{\Gamma_h}
\end{equation}
and a linear functional $\ell_h(\cdot) $ on $V_h$ as
\begin{equation}\label{equ:dlf}
\ell_h(v_h) = (f\circ p, v_h)_{\Gamma_h}
\end{equation}
where $(\cdot, \cdot)_{\Gamma_h}$ is the standard $L^2$ inner product of $L^2(\Gamma_h)$.
Then the surface Crouzeix-Raviart  finite element discretization of  the model problem \eqref{equ:model} reads as:  find $u_h \in V_h$ such that 
\begin{equation}\label{equ:fem}
a_h(u_h, v_h) = \ell_h(v_h), \quad \forall v_h \in V_h. 
\end{equation}

 We define an broken $H_1$ semi-norm on $V_h$ as 
 \begin{equation}
|v_h|_{H^1(\Gamma_h; \mathcal{T}_h)}^2 =\sum_{T\in\mathcal{T}_h}\|\nabla_{\Gamma_h}v_h \|_{L^2(T)}^2
\end{equation}
The corresponding discrete energy norm is given by 
 \begin{equation}
\|v\|_{h}^2 = |v_h|_{ H^1(\Gamma_h; \mathcal{T}_h)}^2 + \|v_h\|_{L^2(\Gamma_h)}^2 = a_h(v_h, v_h).
\end{equation}
Then,  it is easy  to show that following Lemma:

\begin{lemma}
 $\|v\|_{h} $  is a norm on $V_h$.   The Lax-Milgram theorem implies the discrete variational problem 
 \eqref{equ:fem} admits a unique solution. 
\end{lemma}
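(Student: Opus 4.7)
The plan is to verify the two claims separately, and both reduce to routine checks because the definition of $\|\cdot\|_h$ already bundles in an $L^2$ term.

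First I would establish that $\|\cdot\|_h$ is a norm on $V_h$. Homogeneity and the triangle inequality pass directly from the broken $H^1$ semi-norm and the $L^2(\Gamma_h)$ norm to their Pythagorean combination, so the only property requiring attention is positive definiteness. Suppose $\|v_h\|_h = 0$ for some $v_h \in V_h$. Since
\begin{equation*}
\|v_h\|_h^2 = |v_h|_{H^1(\Gamma_h;\mathcal{T}_h)}^2 + \|v_h\|_{L^2(\Gamma_h)}^2,
\end{equation*}
both nonnegative terms vanish. In particular $\|v_h\|_{L^2(\Gamma_h)} = 0$, which immediately forces $v_h = 0$ a.e.\ on $\Gamma_h$, and hence $v_h \equiv 0$ since each $v_h|_T$ is a polynomial. (The standard Crouzeix--Raviart Poincaré argument via midpoint continuity is not needed here because the $L^2$ mass has been incorporated directly into the energy.)

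Second, I would invoke Lax--Milgram on $(V_h, \|\cdot\|_h)$. Bilinearity of $a_h$ is immediate from its definition as a sum of inner products of linear operators. Coercivity is actually built into the definition: the identity $a_h(v_h, v_h) = \|v_h\|_h^2$ gives a coercivity constant of $1$. Boundedness of $a_h$ is a termwise Cauchy--Schwarz: each local integral satisfies $|\int_T \nabla_{\Gamma_h} w_h \cdot \nabla_{\Gamma_h} v_h \, ds_h| \le \|\nabla_{\Gamma_h} w_h\|_{L^2(T)} \|\nabla_{\Gamma_h} v_h\|_{L^2(T)}$, and summing over $T \in \mathcal{T}_h$ and combining with the $L^2$ piece via a second Cauchy--Schwarz in $\mathbb{R}^2$ yields $|a_h(w_h, v_h)| \le \|w_h\|_h \|v_h\|_h$. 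Finally, continuity of the right-hand side $\ell_h$ in \eqref{equ:dlf} follows from
\begin{equation*}
|\ell_h(v_h)| \le \|f \circ p\|_{L^2(\Gamma_h)} \|v_h\|_{L^2(\Gamma_h)} \le \|f \circ p\|_{L^2(\Gamma_h)} \|v_h\|_h,
\end{equation*}
provided $f \circ p \in L^2(\Gamma_h)$; this is guaranteed because $p: \Gamma_h \to \Gamma$ is a bi-Lipschitz diffeomorphism for $h$ small enough (vertices lie on $\Gamma$ and $\Gamma$ is smooth), so the composition pulls $L^2(\Gamma)$ back to $L^2(\Gamma_h)$ with bounded Jacobian.

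No part of the argument looks like an obstacle; the closest thing to a subtlety is checking that $f \circ p$ is well-defined and square-integrable on $\Gamma_h$, which is a matter of recalling the geometric setup from Section~3.1 rather than proving anything new. Both conclusions then follow, and the assembled proof should be only a few lines.
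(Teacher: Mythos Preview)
Your proposal is correct and matches what the paper has in mind: the paper states the lemma immediately after defining $\|\cdot\|_h$ and simply remarks that it is ``easy to show,'' offering no proof. Your routine verification---positive definiteness from the $L^2$ term, coercivity with constant $1$ from the identity $a_h(v_h,v_h)=\|v_h\|_h^2$, and boundedness of $a_h$ and $\ell_h$ via Cauchy--Schwarz---is exactly the intended argument.
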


\section{A priori error estimates}\label{sec:err}

\subsection{Lift and extension functions}
To compare the error between the exact solution $u$ defined on $\Gamma$ and the finite element solution $u_h$ 
defined on $\Gamma_h$,  we need to establish connections between the functions defined on $\Gamma$ and $\Gamma_h$.

Following the notation as in \cite{BHLLM2018},   for a function $v$ defined on $\Gamma$,  we extend it to $U$ 
and define the extension $v^e$  by
\begin{equation}\label{equ:extension}
 v^e(x)  = v (p(x)), \quad \forall x\in U.
\end{equation}
Similarly, for a function $v_h$  defined on $\Gamma_h$, we define the lift of $v_h$ onto $\Gamma$ by 
 \begin{equation}
v^{l}(x) = v(\xi(x)), \quad \forall x \in \Gamma,
\end{equation}
where $\xi(x)$ is the unique solution of  
\begin{equation}
x =p(\xi) =  \xi - d(\xi)n(x). 
\end{equation}

Then we build the relationship in gradients of extensions and lifts.   For such propose, we introduce 
the matrix $B = P(x) - d(x)H(x)$. It is easy to check that $B = PB = BP = PBP$.  The following relationship
is proved in \cite{BHLLM2018, DD2007, LL2017}
\begin{equation}\label{equ:gradlift}
\nabla_{\Gamma_h} v^e = P_hB(\nabla_{\Gamma}v)^e.
\end{equation}

Let $ds$ and $ds_h$ be the surface measures of $\Gamma$ and $\Gamma_h$. 
 For any $x\in \Gamma_h$,  \cite{DD2007} shows that 
there exists $\mu_h$ such that  $ds\circ p(x) = \mu_h(x) ds_h(x)$   with 
\begin{equation}
\mu_h(x) = (1-d(x)k_1(x))(1-d(x)k_2(x))n\cdot n_h. 
\end{equation}

Throughout the paper, we assume that $\Gamma_h \subset U$.  In the following, we collect some  geometric 
approximation results which will be used in our proof:
\begin{lemma}
Suppose $\Gamma_h \subset U$ is a polyhedral approximation of $\Gamma$.  Assume the mesh size $h$ is small enough. Then 
the following error estimates hold:
\begin{align}
 \|d\|_{L^{\infty}(T)} &\lesssim h^2, \label{equ:ineqone} \\
 \|1-\mu_h\|_{L^{\infty}(T)} &\lesssim h^2, \label{equ:ineqtwo} \\
 \|n-n_h\|_{L^{\infty}(T)}&\lesssim h,    \label{equ:ineqthree} \\
 \|P-P_h\|_{L^{\infty}(T)}&\lesssim h,    \label{equ:ineqfour} \\
 \|n_{E^l}^{+/-} - P n_{E}^{+/-}\|_{L^{\infty}(T)}&\lesssim h^2, \label{equ:ineqfive}\\
  \|\Delta_{\Gamma_h} u^e - (\Delta_{\Gamma} u)^e\|_{L^{2}(T)}&\lesssim h\|u\|_{H^2(T^l)},\label{equ:ineqsix}
\end{align}
where $|\cdot|$ is the standard Euclidean norm. 
\end{lemma}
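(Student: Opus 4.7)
\medskip

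\noindent\textbf{Proof proposal.} The six estimates are of very different flavours, so I would handle them in the order written, using the earlier ones to drive the later ones.

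For \eqref{equ:ineqone}, since all three vertices of $T$ lie on $\Gamma$ by assumption, the signed distance function $d$ vanishes at three affinely independent points of $T$. A Taylor expansion around any such vertex, together with the smoothness of $d$ on $U$ and the shape-regularity of $T$, then gives $|d(x)|\lesssim h^2$ pointwise on $T$. Estimate \eqref{equ:ineqthree} is the classical observation that on a flat triangle with vertices on $\Gamma$, the piecewise constant normal $n_h$ is a first-order approximation of the smooth normal $n$: one writes $n(x)-n_h = (n(x)-n(x_T)) + (n(x_T)-n_h)$ for any fixed reference point $x_T\in T$, bounds the first piece by Lipschitz continuity of $n=\nabla d$, and the second by a standard argument that the normal to the chord agrees with the surface normal up to $O(h)$ via the mean value theorem applied to $n$ along the triangle. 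Then \eqref{equ:ineqfour} is immediate from $P-P_h = n_h\otimes n_h - n\otimes n$ and the triangle inequality applied to $n_h\otimes(n_h-n)+(n_h-n)\otimes n$ combined with \eqref{equ:ineqthree}.

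Estimate \eqref{equ:ineqtwo} is obtained by expanding $\mu_h = (1-dk_1)(1-dk_2)\, n\cdot n_h$, where the factors $(1-dk_i)$ contribute $O(h^2)$ by \eqref{equ:ineqone} and the boundedness of the principal curvatures, while $n\cdot n_h = 1 - \tfrac{1}{2}|n-n_h|^2 = 1 + O(h^2)$ by \eqref{equ:ineqthree} (using that both vectors are unit). For \eqref{equ:ineqfive}, the idea is that $P n_E^{+/-}$ is the projection of the flat conormal onto the tangent plane of $\Gamma$, and one compares it with the true curved conormal $n_{E^l}^{+/-}$ using the local chart $p$: both vectors are unit, both are tangent to the curved edge $E^l$ at $p(x)$ up to $O(h^2)$ terms that come from estimating $\nabla p - P$ in $L^\infty(T)$ via the definition $p(x) = x - d(x)n(p(x))$ together with \eqref{equ:ineqone}; an orientation argument fixes the sign, after which the difference squared is controlled by the square of the deviation of the tangent directions, yielding $O(h^2)$.

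The main obstacle is \eqref{equ:ineqsix}, since it involves second tangential derivatives. My plan is to start from the gradient identity \eqref{equ:gradlift}, namely $\nabla_{\Gamma_h} u^e = P_h B (\nabla_\Gamma u)^e$, and apply $\nabla_{\Gamma_h}\cdot$ to both sides. This produces three contributions: one that reproduces $(\Delta_\Gamma u)^e$ up to the factor $P_h B P - P$ acting on a second-order derivative tensor, a first-order term involving $\nabla_{\Gamma_h}(P_h B)$, and a term coming from the difference between the tangential divergence on $\Gamma_h$ and the pulled-back tangential divergence on $\Gamma$. Using \eqref{equ:ineqone}, \eqref{equ:ineqthree}, \eqref{equ:ineqfour}, the boundedness of $H$, and the definition $B = P - dH$, each of these contributions is shown to be $O(h)$ in operator norm applied to first/second derivatives of $u$. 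Integrating over $T$ and passing to $T^l$ using the smooth change of variables $p$ (which differs from the identity by $O(h^2)$ in Jacobian, hence harmless for this estimate) then yields the $L^2$ bound $h\|u\|_{H^2(T^l)}$. I expect the bookkeeping of the commutator between $\nabla_{\Gamma_h}\cdot$ and the extension operator to be the technically demanding step; once the pointwise identity is in place, the rest is a direct Cauchy--Schwarz.
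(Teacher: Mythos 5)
The paper does not actually prove this lemma: it disposes of \eqref{equ:ineqone}--\eqref{equ:ineqfour} by citing standard linear interpolation theory and \cite{DD2007}, and of \eqref{equ:ineqfive}--\eqref{equ:ineqsix} by citing \cite{LL2017}. Your self-contained sketch is therefore a genuinely different (more explicit) route, and for the first four estimates it is the standard argument and sound. A cleaner packaging of your Taylor step for \eqref{equ:ineqone}: since $d$ vanishes at the three vertices, $d$ equals $d-I_hd$ on $T$ for the linear interpolant $I_hd\equiv 0$, so $\|d\|_{L^\infty(T)}\lesssim h^2$ and also $\|\nabla_{\Gamma_h}d\|_{L^\infty(T)}=\|P_hn\|_{L^\infty(T)}\lesssim h$, which immediately yields \eqref{equ:ineqthree}, then \eqref{equ:ineqfour}, and \eqref{equ:ineqtwo} via $n\cdot n_h=1-\tfrac12|n-n_h|^2$ exactly as you say.

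Two points need repair. First, in \eqref{equ:ineqfive} you describe $n_{E^l}^{\pm}$ and $Pn_E^{\pm}$ as ``tangent to the curved edge $E^l$''; a conormal is tangent to $\Gamma$ but \emph{orthogonal} to the edge, and if you literally tried to compare edge tangents the argument would not produce the conormal estimate. The correct version of your comparison is: $Pn_E^{\pm}$ is unit up to $O(h^2)$ because $n\cdot n_E^{\pm}=(n-n_h)\cdot n_E^{\pm}=O(h)$; the tangent of $E^l$ is $\nabla p\,t_E=Pt_E+O(h^2)$ (using $\|d\|_\infty\lesssim h^2$); and $Pn_E\cdot Pt_E=n_E\cdot t_E-(n\cdot n_E)(n\cdot t_E)=O(h^2)$, so both vectors are unit, tangent to $\Gamma$, orthogonal to the edge direction up to $O(h^2)$, and consistently oriented, whence the $O(h^2)$ bound. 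Second, in \eqref{equ:ineqsix} the claim that each contribution is ``$O(h)$ in operator norm'' is too strong for the first-order term: $\nabla_{\Gamma_h}(P_hB)$ contains $P_h\nabla_{\Gamma_h}(P\circ p)$, which is only $O(1)$ (it carries the curvature of $\Gamma$). That term is nevertheless admissible, but only after contraction with the tangential field $(\nabla_\Gamma u)^e$: writing $P=I-n\otimes n$ and using $n\cdot(\nabla_\Gamma u)^e=0$, the surviving piece is proportional to $P_hn=P_h(n-n_h)=O(h)$, while the pieces $(\nabla_{\Gamma_h}d)H$ and $d\,\nabla_{\Gamma_h}(H\circ p)$ are $O(h)$ and $O(h^2)$; alternatively one may cancel it against the matching first-order term of $(\Delta_\Gamma u)^e$. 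For the second-order part you need the componentwise identity $\nabla_{\Gamma_h}\bigl((\nabla_\Gamma u)^e\bigr)=P_hB(\nabla_\Gamma\nabla_\Gamma u)^e$, which leaves the coefficient $BP_hB-P$ acting on the tangential Hessian, and that is $O(h)$ by \eqref{equ:ineqfour} --- this is precisely where the estimate degenerates to first order. With these repairs your plan closes and reproduces the estimates the paper imports from \cite{LL2017}; the paper buys brevity by citation, your route buys a self-contained proof at the cost of this bookkeeping.
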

\begin{proof}
The inequalities \eqref{equ:ineqone}--\eqref{equ:ineqfour} can be proved using the standard linear interpolation theory.  
Their proof can be found in \cite{DD2007}.  
The last two estimates were proved in \cite{LL2017}.
\end{proof}

\begin{remark}
 For the planar domain case,  it is well known that $n_E^+ = - n_E^-$ and hence $ |n_E^++ n_E^-| =0$.
 But this relationship does not hold any more in the surface setting. 
\end{remark}

To connect the  function defined on the exact surface and its extension on the discrete surface, we need 
the following norm equivalence theorem whose proof can be proved in \cite{Dz1988, LL2017}
\begin{lemma}
Let $T\in \mathcal{T}_h$. If  $v\in H^2(T)$, then  the following results hold:
\begin{align}
 \|v^l\|_{L^2(T^l)} \lesssim & \|v\|_{L^2(T)} \lesssim  \|v^l\|_{L^2(T^l)}, \label{equ:eleml2}\\
  |v^l|_{H^1(T^l)} \lesssim & |v|_{H^1(T)} \lesssim  |v^l|_{H^1(T^l)}, \label{equ:elemh1}\\
& |v|_{H^2(T)} \lesssim  \|v^l\|_{H^2(T^l)}, \label{equ:elemh2}\\
  |v^l|_{H^2(T^l)} \lesssim & \|v\|_{H^2(T)}. \label{equ:elemh22}
\end{align}
\end{lemma}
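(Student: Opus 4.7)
The plan is to reduce everything to a change of variables between $T$ and $T^l$ together with the gradient identity \eqref{equ:gradlift}. Two ingredients make this possible: first, \eqref{equ:ineqtwo} gives $\mu_h = 1 + O(h^2)$, so the surface measure ratio is bounded above and below by positive constants uniformly in $T$; second, the matrix $B = P - dH$ is a perturbation of $P$ of size $O(h^2)$ by \eqref{equ:ineqone}, and consequently the restriction of $P_hB$ to the tangent plane of $\Gamma$ defines an isomorphism onto the tangent plane of $\Gamma_h$ for $h$ small enough, with both the map and its inverse having operator norm bounded independently of $h$.

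For the $L^2$ equivalence, I would apply the change of variables $ds\circ p = \mu_h\, ds_h$ to obtain
\begin{equation*}
\|v^l\|^2_{L^2(T^l)} = \int_T |v(x)|^2\, \mu_h(x)\, ds_h,
\end{equation*}
and then use the two-sided bound on $\mu_h$. For the $H^1$ equivalence, I would apply \eqref{equ:gradlift} with the roles identified so that on $T$ one has $v=(v^l)^e$, giving $\nabla_{\Gamma_h} v = P_h B (\nabla_\Gamma v^l)^e$. Taking pointwise norms and invoking the uniform bound $\|P_hB\|_{L^\infty(T)} \lesssim 1$, then integrating against $\mu_h$, yields the first inequality in \eqref{equ:elemh1}. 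The reverse direction follows by inverting $P_hB$ on the tangent plane of $\Gamma$, which is permitted thanks to the isomorphism property discussed above.

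For the $H^2$ estimates, I would tangentially differentiate the identity $\nabla_{\Gamma_h} v = P_h B (\nabla_\Gamma v^l)^e$ once more. Using that $P_h$ is piecewise constant on $T$ and that the tangential derivatives of $B$ are controlled by $\|H\|_{L^\infty}$ and $\|d\|_{L^\infty(T)} \lesssim h^2$, one obtains a pointwise bound of the schematic form
\begin{equation*}
|D^2_{\Gamma_h} v| \lesssim |(D^2_{\Gamma} v^l)^e| + |(\nabla_\Gamma v^l)^e|,
\end{equation*}
on $T$. Integration and the change of variables, combined with the already established $H^1$ equivalence, then produce \eqref{equ:elemh2}; the dual bound \eqref{equ:elemh22} is obtained symmetrically by inverting $P_hB$ and differentiating its inverse, whose derivatives are again controlled by the geometry of $\Gamma$.

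The main obstacle is the second-order step: unlike the first-order identity, here one must carefully track the commutator terms that arise when tangentially differentiating $P_h$ across elements, and show that the extra factors of $h$ hidden in $B - P = -dH$ do not pollute the $H^2$ estimate. This is exactly the point where the sharper decay $\|d\|_{L^\infty(T)} \lesssim h^2$ from \eqref{equ:ineqone} is needed, as opposed to the weaker $O(h)$ bounds available for $n - n_h$ and $P - P_h$.
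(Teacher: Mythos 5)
Your argument is correct and is essentially the standard norm-equivalence proof that the paper itself does not spell out but delegates to the cited references (Dziuk 1988 and related works): change of variables with the two-sided bound on $\mu_h$, the identity $\nabla_{\Gamma_h}v = P_hB(\nabla_\Gamma v^l)^e$ with uniform invertibility of $P_hB$ between the tangent planes, and one more tangential differentiation for the second-order bounds, where lower-order terms are harmless because the right-hand sides of \eqref{equ:elemh2}--\eqref{equ:elemh22} carry full $H^2$ norms. One minor remark: since the estimates are element-wise ($P_h$ is constant on each $T$) and only uniform boundedness and invertibility of $P_hB$ are needed, smallness of $d$ suffices; the sharper rate $\|d\|_{L^\infty(T)}\lesssim h^2$ is not actually required here, only for the quantitative geometric-error estimates elsewhere in the paper.
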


Also, we need the following norm equivalence results for function defined on the edge of an element \cite{BHLLM2018}
\begin{lemma}
Let $E\in \mathcal{E}_h$. If  $v\in H^1(E)$, then  the following results hold:
\begin{align}
 \|v^l\|_{L^2(E^l)} \lesssim & \|v\|_{L^2(E)} \lesssim  \|v^l\|_{L^2(E^l)}, \label{equ:edgel2}\\
  |v^l|_{H^1(E^l)} \lesssim & |v|_{H^1(E)} \lesssim  |v^l|_{H^1(E^l)}. \label{equ:edgeh1}
\end{align}
\end{lemma}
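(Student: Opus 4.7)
The plan is to transfer the element-wise equivalences \eqref{equ:eleml2}--\eqref{equ:elemh22} to the one-dimensional setting by parametrizing $E^l$ through the map $p$, and then controlling the resulting one-dimensional Jacobian via the geometric estimates \eqref{equ:ineqone}--\eqref{equ:ineqfive}.

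First I would set up coordinates. Parametrize $E$ by arclength $s\mapsto x(s)$, let $t_E$ be the unit tangent to $E$, and note that $E^l$ is then parametrized by $s\mapsto p(x(s))$. Differentiating the defining relation $p(x)=x-d(x)\,n(p(x))$ yields
\begin{equation*}
Dp(x)=\bigl(Id+d(x)\,\mathbf{H}(p(x))\bigr)^{-1}\bigl(Id-n(p(x))\otimes n(p(x))\bigr),
\end{equation*}
so that the one-dimensional Jacobian is $J_E(x):=ds^l/ds_E=|Dp(x)\,t_E|$. Using \eqref{equ:ineqone} together with \eqref{equ:ineqthree}--\eqref{equ:ineqfour}, we have $Dp=P+O(h^2)$ uniformly on $E$, and because $t_E$ lies in the tangent plane of $T$ (so $P_h t_E=t_E$) while $P-P_h=O(h)$, one obtains $|Dp\,t_E|=1+O(h)$; in particular, for $h$ small enough $J_E$ is bounded above and below by positive constants independent of $E$ and $h$.

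The $L^2$ equivalence \eqref{equ:edgel2} is then immediate from the change of variables
\begin{equation*}
\|v^l\|_{L^2(E^l)}^2=\int_{E^l}|v^l|^2\,ds^l=\int_E |v(x)|^2\,J_E(x)\,ds_E,
\end{equation*}
combined with the two-sided bound on $J_E$. For \eqref{equ:edgeh1}, I would differentiate the identity $v(x)=v^l(p(x))$ along $E$ to get
\begin{equation*}
\tfrac{dv}{ds_E}(x)=\nabla v^l(p(x))\cdot Dp(x)\,t_E=J_E(x)\,\tfrac{dv^l}{ds^l}(p(x)),
\end{equation*}
since the unit tangent to $E^l$ is $Dp\,t_E/|Dp\,t_E|$ and $\nabla v^l\cdot t_{E^l}=dv^l/ds^l$. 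Squaring, multiplying by $J_E^{-1}\,J_E\,ds_E=ds^l$, and integrating yields
\begin{equation*}
|v^l|_{H^1(E^l)}^2=\int_E\Bigl|\tfrac{dv}{ds_E}\Bigr|^2 J_E(x)^{-1}\,ds_E,
\end{equation*}
which again is equivalent to $|v|_{H^1(E)}^2$ by the two-sided bound on $J_E$.

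The only real obstacle is bookkeeping around the Jacobian formula: one must be careful that $t_E$ is tangent to $\Gamma_h$ on $T$ but not to $\Gamma$, so that the product $Dp\,t_E$ must be analyzed using both \eqref{equ:ineqthree} (smallness of $n-n_h$) and \eqref{equ:ineqfour} (smallness of $P-P_h$) on top of \eqref{equ:ineqone}. Once this is in hand, everything else is a direct consequence of change of variables on a one-dimensional manifold, exactly in parallel with the element-wise arguments leading to \eqref{equ:eleml2}--\eqref{equ:elemh1}.
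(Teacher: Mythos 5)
Your proposal is correct, but note that the paper itself does not prove this lemma at all: it simply cites \cite{BHLLM2018} for the edge norm equivalences, just as it cites \cite{Dz1988, LL2017} for the element-wise ones. What you have written is essentially the standard argument behind that citation, carried out directly: parametrize $E$ by arclength, observe that $E^l=p(E)$, and control the one-dimensional Jacobian $J_E=|Dp\,t_E|$ of the projection restricted to the edge. Your differentiation of $p(x)=x-d(x)n(p(x))$ giving $Dp=(Id+d\,\mathbf H(p))^{-1}P$ is right, and with \eqref{equ:ineqone} this yields $Dp=P+O(h^2)$; combining $P_ht_E=t_E$ with \eqref{equ:ineqthree}--\eqref{equ:ineqfour} gives $|Dp\,t_E|=1+O(h)$ (in fact $1+O(h^2)$, since $n\cdot t_E=(n-n_h)\cdot t_E=O(h)$ enters quadratically, but $O(h)$ already gives uniform two-sided bounds for $h$ small). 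The change-of-variables identities you state for \eqref{equ:edgel2} and \eqref{equ:edgeh1} are then correct, including the chain-rule step $\tfrac{dv}{ds_E}=J_E\,\tfrac{dv^l}{ds^l}\circ p$, which is legitimate because $Dp\,t_E$ is tangent to $\Gamma$ at $p(x)$ so pairing with the tangential gradient of $v^l$ is well defined; a density argument covers general $v\in H^1(E)$. So your route buys a self-contained proof in parallel with the element-wise estimates \eqref{equ:eleml2}--\eqref{equ:elemh1}, whereas the paper's economy is to defer the (identical in spirit) computation to the reference.
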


%\begin{proof}
% Let $v_h\in V_h$ and $w_h = u_h - v_h$. From , we see that 
% \begin{equation}
%\begin{split}
% \|u_h-v_h\|_{h}^2 \le & a_h(u_h-v_h, u_h-v_h) \\
%  = & a_h(u^e-v_h, w_h) + a_h(u_h,w_h) - a_h(u^e, w_h) \\
%  = & a_h(u^e-v_h, w_h) +  \ell_h(w_h) - \ell(w_h^l)  + \\
%  & \ell(w_h^l) - a_h(u^e, w_h).
% \end{split}
%\end{equation}
%Dividing through by $\|w_h\|_{h}$ and using the continuity of $a_h$, we get 
%\begin{equation*}
%\|u_h-v_h\|_{h} \le \|u^e-v_h\|_{h}+\sup_{w_h\in V_h}\frac{|a_h(u^e, w_h) - \ell( w_h^l)|}{\|w_h\|_{h}} +
% \sup_{w_h\in V_h}\frac{|\ell(w_h^l)- \ell_h( w_h^l)|}{\|w_h\|_{h}} .
%\end{equation*}
%Since $v_h$ is an arbitrary element in $S_h$, the assertion follows from the triangle inequality
%\begin{equation*}
%\|u^e-u_h\|_{h} \le \|u - v_h\|_{h} + \|u_h - v_h\|_{h}.
%\end{equation*}
%\end{proof}

\subsection{The nonconforming interpolation}
For any $T\in \mathcal{T}_h$, let $\mathcal{E}_T$ be the set of three edges of $T$. 
We define 
the local interpolation operator   $\Pi_T: H^1(T) \rightarrow \mathbb{P}_1(T)$  
by 
\begin{equation}
(\Pi_Tv)(m_E) = \frac{1}{|E|}\int_E vd\sigma_h, \quad \forall v\in H^1(T), \, E\in \mathcal{E}_T,
\end{equation}
where $|E|$ is the length of $E$.   By the midpoint rue, we can show that 
\begin{equation}\label{equ:eqdef}
\int_{E}(\Pi_Tv)d\sigma_h = \int_{E}vd\sigma_h, \quad  E\in \mathcal{E}_T.
\end{equation}

Let $h_T$ be the diameter of $T$. Then the following error estimate holds \cite{CR1973, Br2014}
\begin{equation}\label{equ:localinterr}
\|v - \Pi v\|_{L^2(T)} +h_T |v - \Pi v|_{H^1(T)}  \lesssim h_T^2 |v|_{H^2(T)},
\end{equation}
for any $v\in H^2(T)$.  The global interpolation operator $\Pi_h: H^1(\Gamma_h) \rightarrow V_h$  is defined by 
\begin{equation}
(\Pi_hv)|_T = \Pi_Tv, \quad \forall T\in \mathcal{T}_h. 
\end{equation}

It follows from \eqref{equ:localinterr} that
\begin{equation}\label{equ:interr}
\|v - \Pi_h v\|_{L^2(\Gamma_h)} +h |v - \Pi_h v|_{H^1(\Gamma_h; \mathcal{T}_h)}  \lesssim h^2 |v|_{H^2(\Gamma_h)},
\end{equation}
In particular, let $v = u^e$. Then we have 
\begin{equation}\label{equ:inferror}
 \inf_{v_h\in V_h}\|u^e-v_h\|_h \le |v - \Pi_h v|_{H^1(\Gamma_h; \mathcal{T}_h)} +   \|u- \Pi_h u^e\|_{L^2(\Gamma_h)}  \lesssim h |u^e|_{H^2(\Gamma_h)}.
% \lesssim h|u|_{H^2(\Gamma)}. 
\end{equation}

\subsection{Energy error estimate}
In this subsection, we establish the error bound in the  discrete energy error. 
Our main tool of the error estimation is the second  Strang  Lemma \cite{Ci2002, BS2008, Br2007}:
\begin{lemma}[The second Strang Lemma]\label{lem:strang}
 Suppose $u$ is the exact solution of \eqref{equ:var}  and $u_h$ is  the finite element solution of \eqref{equ:fem} .
 Then we obtain that 
\begin{equation}
\begin{split}
 || u^e- u_h||_{h} \lesssim & \inf_{v_h\in V_h} \|u^e - v_h\|_{h} + \sup_{w_h\in V_h}\frac{|a_h(u^e, w_h) -(f^e,  w_h)|}{\|w_h\|_{h}} 
 \end{split}
\end{equation}
\end{lemma}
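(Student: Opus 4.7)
The plan is to follow the textbook proof of the second Strang lemma, adapted only in that the ``consistency'' right-hand side uses the extended datum $f^e = f\circ p$ (which matches $\ell_h$ exactly by the definition \eqref{equ:dlf} of the discrete load). Since the discrete bilinear form satisfies $a_h(v_h,v_h)=\|v_h\|_h^2$, we have coercivity with constant $1$, and a triangle inequality on the element-wise $L^2$ norms of $\nabla_{\Gamma_h}$ gives the Cauchy--Schwarz bound $|a_h(u,v)|\le \|u\|_h\|v\|_h$ for any broken-$H^1$ functions on $\mathcal T_h$. These two ingredients are the only structural facts about $a_h$ that I will use.

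First I would pick an arbitrary $v_h\in V_h$ and apply the triangle inequality
\[
\|u^e - u_h\|_h \le \|u^e - v_h\|_h + \|v_h - u_h\|_h.
\]
To control the second term I set $w_h := v_h - u_h \in V_h$ and use coercivity together with the insertion of $\pm u^e$:
\[
\|w_h\|_h^2 = a_h(v_h - u_h, w_h) = a_h(v_h - u^e, w_h) + \bigl(a_h(u^e, w_h) - a_h(u_h, w_h)\bigr).
\]
The Galerkin identity \eqref{equ:fem} together with $\ell_h(w_h) = (f^e, w_h)_{\Gamma_h}$ converts the bracket into the consistency residual $a_h(u^e, w_h) - (f^e, w_h)_{\Gamma_h}$.

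Next I apply Cauchy--Schwarz for $a_h$ to the first term, giving $|a_h(v_h - u^e, w_h)| \le \|v_h - u^e\|_h\,\|w_h\|_h$, and bound the residual term by its supremum:
\[
|a_h(u^e, w_h) - (f^e, w_h)_{\Gamma_h}| \le \sup_{\tilde w_h\in V_h}\frac{|a_h(u^e, \tilde w_h) - (f^e, \tilde w_h)_{\Gamma_h}|}{\|\tilde w_h\|_h}\, \|w_h\|_h.
\]
Dividing out one factor of $\|w_h\|_h$, feeding the estimate back into the triangle inequality, and then taking the infimum over $v_h \in V_h$ produces the stated bound (with an irrelevant factor of $2$ absorbed into $\lesssim$).

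I do not anticipate any serious obstacle: the argument is completely abstract and uses neither the nonconforming structure of $V_h$ nor the surface geometry beyond the fact that $\ell_h$ is written in terms of $f^e$. The only point that merits a sentence of comment in a full write-up is the Cauchy--Schwarz inequality for $a_h$ on the broken space, which follows by summing the elementwise Cauchy--Schwarz inequalities for the gradient pairing together with the global Cauchy--Schwarz inequality for $(\cdot,\cdot)_{\Gamma_h}$ and then one more Cauchy--Schwarz in $\mathbb{R}^2$.
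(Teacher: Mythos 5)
Your proof is correct and is exactly the classical second-Strang-lemma argument (coercivity with constant one from $a_h(w_h,w_h)=\|w_h\|_h^2$, insertion of $\pm u^e$, the discrete equation \eqref{equ:fem} with $\ell_h(w_h)=(f^e,w_h)_{\Gamma_h}$, Cauchy--Schwarz on the broken form, then the triangle inequality and infimum over $v_h$), which is the same route the paper takes by citing the standard references rather than reproving it. No gaps; the only implicit point, that $a_h(u^e,\cdot)$ is well defined since $u^e$ lies in the broken $H^1$ space on $\Gamma_h$, is harmless.
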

\begin{remark}
We also call  the first term is the approximation error and the second term is the nonconforming consistency error. But different from the planar domain case, 
the second term also involves the geometric error in addition to the classical nonconforming consistency error. 
 We measure the error  using the discrete energy norm on the approximate surface and this is the key part to bound the nonconforming consistency error. 
\end{remark}

We prepare the energy error estimation with some  geometric error estimates.  We begin with the following Lemma:
\begin{lemma}\label{lem:geo}
 Let $u$ be the solution of \eqref{equ:var} and $u^e$ be its extension to $U$ defined by \eqref{equ:extension}. 
 Then we have the following error estimates holds 
\begin{align}
&| (u, w_h^l) - (u^e, w_h)_{\Gamma_h} | \lesssim h^2 \|u\|_{L^2(\Gamma)}\|w_h\|_{L^2(\Gamma_h)}, \label{equ:geoerrl2}\\
&|(f,  w_h^l)- (f^e, w_h)_{\Gamma_h}  | \lesssim h^2 \|f\|_{L^2(\Gamma)}\|w_h\|_{L^2(\Gamma_h)}.\label{equ:geoerrlf}
\end{align}
for any $w_h \in V_h$. 
\end{lemma}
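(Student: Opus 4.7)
The plan is to reduce both inequalities to a single change-of-variables calculation on $\Gamma_h$, then exploit the geometric estimate $\|1-\mu_h\|_{L^\infty(T)}\lesssim h^2$ from \eqref{equ:ineqtwo}. I would handle the two bounds simultaneously since they are structurally identical, with $f$ playing the same role as $u$.

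First I would rewrite the integrals on $\Gamma$ in terms of integrals on $\Gamma_h$ by pulling back along $p$. For any $g\in L^2(\Gamma)$, the identity $g^e = g\circ p$ and $w_h^l\circ p = w_h$ (which follows from the definitions of lift and extension) together with the area formula $ds\circ p = \mu_h\, ds_h$ give
\begin{equation*}
(g,w_h^l)_\Gamma \;=\; \int_\Gamma g\, w_h^l\, ds \;=\; \int_{\Gamma_h} g^e\, w_h\, \mu_h\, ds_h.
\end{equation*}
Subtracting $(g^e,w_h)_{\Gamma_h}=\int_{\Gamma_h} g^e w_h\, ds_h$ yields the clean error representation
\begin{equation*}
(g,w_h^l)_\Gamma - (g^e,w_h)_{\Gamma_h} \;=\; \int_{\Gamma_h} g^e\, w_h\, (\mu_h-1)\, ds_h.
\end{equation*}

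Next I would estimate this remainder elementwise. Applying H\"older's inequality on each $T\in\mathcal{T}_h$, then using \eqref{equ:ineqtwo} to pull $\|\mu_h-1\|_{L^\infty(T)}\lesssim h^2$ outside, and summing over $T$ with Cauchy--Schwarz gives
\begin{equation*}
\Bigl|\int_{\Gamma_h} g^e w_h(\mu_h-1)\,ds_h\Bigr|
\;\lesssim\; h^2\, \|g^e\|_{L^2(\Gamma_h)}\,\|w_h\|_{L^2(\Gamma_h)}.
\end{equation*}
Finally, the elementwise norm equivalence \eqref{equ:eleml2}, summed over $\mathcal{T}_h$, shows $\|g^e\|_{L^2(\Gamma_h)}\lesssim \|g\|_{L^2(\Gamma)}$; applying this with $g=u$ gives \eqref{equ:geoerrl2} and with $g=f$ gives \eqref{equ:geoerrlf}.

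I do not anticipate a genuine obstacle here: the whole argument is essentially one change of variables plus the $O(h^2)$ bound on $1-\mu_h$. The only small care-point is confirming that the extension is compatible with the surface measure identity (i.e.\ that $w_h^l\circ p = w_h$ even though $w_h$ is only piecewise smooth and discontinuous across edges), which is immediate because $p$ maps each $T$ bijectively onto $T^l$ and the defining relation $v^l(x)=v_h(\xi(x))$ is pointwise. Thus the main content of the lemma is genuinely geometric and reduces to \eqref{equ:ineqtwo}.
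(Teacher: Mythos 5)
Your proposal is correct and follows essentially the same route as the paper: a change of variables giving the error representation with the factor $\mu_h-1$, the bound \eqref{equ:ineqtwo}, and the $L^2$ norm equivalence between $\Gamma$ and $\Gamma_h$ (the last step the paper leaves implicit). Your write-up is simply a more detailed version of the paper's two-line argument.
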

\begin{proof}
We only prove \eqref{equ:geoerrlf} and \eqref{equ:geoerrl2} can be proved similarly.  Applying the change of variable, we have 
 \begin{equation*}
 \begin{split}
  |(f, w_h^l) - (f^e, w_h)_{\Gamma_h}|
\lesssim  |((\mu_h-1) f^e, w_h)_{\Gamma_h}
\lesssim  h^2 \|f^e\|_{L^2(\Gamma_h)} \|w_h\|_{L^2(\Gamma_h)}
\end{split}
\end{equation*}
where we have used the error estimate \eqref{equ:ineqtwo}. 
\end{proof}

Next, we prove a lemma for estimate the error involving two  conomorals of an edge. 

\begin{lemma}\label{lem:geonc}
 Let $u$ be the solution of \eqref{equ:var} and $u^e$ be its extension to $U$ defined by \eqref{equ:extension}. 
 Then we have the following error estimates holds 
 \begin{equation}\label{equ:lemmaone}
\sum_{E\in\mathcal{E}_h} \int_{E} \left(n_E^+ \cdot \nabla_{ \Gamma_h}^+u^e+ n_E^-\cdot \nabla_{\Gamma_h}^-u^e \right)^2d\sigma_h
\le h^3 \|u\|^2_{H^2(\Gamma)}.
 \end{equation}
\end{lemma}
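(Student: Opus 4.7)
The plan is to rewrite the integrand as a purely geometric error term and then sum over edges using a trace inequality.

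First, I would use the gradient relationship \eqref{equ:gradlift}, writing
\[
\nabla_{\Gamma_h}^{\pm} u^e = P_h^{\pm} B (\nabla_\Gamma u)^e,
\]
and observe that on each element $T^{\pm}$ the conormal $n_E^{\pm}$ lies in the tangent plane of $T^{\pm}$, so $P_h^{\pm} n_E^{\pm} = n_E^{\pm}$. Since $B = P - dH$ is symmetric, this gives
\[
n_E^{\pm} \cdot \nabla_{\Gamma_h}^{\pm} u^e = (B n_E^{\pm}) \cdot (\nabla_\Gamma u)^e.
\]
The key reduction is then to compare $B n_E^{\pm}$ with the exact surface conormal $n_{E^l}^{\pm}$. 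From $\|d\|_{L^\infty(T)} \lesssim h^2$ in \eqref{equ:ineqone} and \eqref{equ:ineqfive} I get
\[
\|B n_E^{\pm} - n_{E^l}^{\pm}\|_{L^\infty(E)} \le \|P n_E^{\pm} - n_{E^l}^{\pm}\|_{L^\infty(E)} + \|d H n_E^{\pm}\|_{L^\infty(E)} \lesssim h^2.
\]

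The next step exploits the smoothness of $u$ on the exact surface. Since $(\nabla_\Gamma u)^e$ is constant along the normal direction (it is the extension by $p$) and the curved edge $E^l$ lies on $\Gamma$ with a single tangent plane along it, the conormals from either side satisfy $n_{E^l}^+ = -n_{E^l}^-$, hence
\[
n_{E^l}^+ \cdot (\nabla_\Gamma u)^e + n_{E^l}^- \cdot (\nabla_\Gamma u)^e = 0.
\]
Combining the two points above,
\[
\bigl|n_E^+ \cdot \nabla_{\Gamma_h}^+ u^e + n_E^- \cdot \nabla_{\Gamma_h}^- u^e\bigr| \lesssim h^2 \,|(\nabla_\Gamma u)^e|
\]
pointwise on $E$.

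Squaring and integrating over $E$, then summing over all edges, it remains to bound $\sum_{E} \int_E |(\nabla_\Gamma u)^e|^2 \, d\sigma_h$. For this I would apply the standard scaled trace inequality on each element $T$, namely
\[
\|w\|_{L^2(\partial T)}^2 \lesssim h^{-1}\|w\|_{L^2(T)}^2 + h\,\|\nabla w\|_{L^2(T)}^2,
\]
with $w = (\nabla_\Gamma u)^e$, and then transfer to the exact surface via the norm equivalences \eqref{equ:elemh1}--\eqref{equ:elemh22}. This produces
\[
\sum_{E\in\mathcal{E}_h} \int_E |(\nabla_\Gamma u)^e|^2 \, d\sigma_h \lesssim h^{-1}\|u\|_{H^1(\Gamma)}^2 + h\,\|u\|_{H^2(\Gamma)}^2 \lesssim h^{-1}\|u\|_{H^2(\Gamma)}^2,
\]
and multiplying by the $h^4$ factor from the squared $O(h^2)$ estimate gives exactly $h^3\|u\|_{H^2(\Gamma)}^2$.

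The main obstacle I anticipate is the first step: getting a clean pointwise $O(h^2)$ identity for the sum of the two conormal derivatives. One must be careful that $B$, $P_h$, and the conormals are all element-dependent, so the algebraic cancellation happens only after projecting onto the correct exact surface conormals and invoking $n_{E^l}^+ + n_{E^l}^- = 0$. Once that is correctly set up, the trace inequality and norm equivalences close the argument routinely.
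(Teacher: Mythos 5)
Your proposal is correct and follows essentially the same route as the paper: rewrite the conormal derivatives via \eqref{equ:gradlift}, exploit $n_{E^l}^+ = -n_{E^l}^-$ so that only $O(h^2)$ conormal-comparison terms (from \eqref{equ:ineqfive}, plus \eqref{equ:ineqone} in your variant where $B$ acts on $n_E^{\pm}$ rather than on $(\nabla_\Gamma u)^e$) survive, then square, sum, and close with a scaled trace inequality and the norm equivalences. The only differences are cosmetic: the paper compares $Pn_E^{\pm}$ with $n_{E^l}^{\pm}$ keeping $B$ on the gradient and applies the trace inequality on the lifted elements $T^l$, while you compare $Bn_E^{\pm}$ with $n_{E^l}^{\pm}$ and trace on $T$ before lifting.
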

\begin{proof}
 Using the triangle inequality and \eqref{equ:gradlift}, we have 
  \begin{equation}
 \begin{split}
&\int_{ E} \left(n_E^+ \cdot \nabla_{ \Gamma_h}^+u^e+ n_E^-\cdot \nabla_{\Gamma_h}^-u^e \right)^2d\sigma_h\\
=&\int_{ E} \left(n_E^+ \cdot  P_h^+B(\nabla_{ \Gamma}u)^e+ n_E^-\cdot P_h^-B(\nabla_{ \Gamma}u)^e \right)^2d\sigma_h\\
=&\int_{ E} \left(Pn_E^+ \cdot  B(\nabla_{ \Gamma}u)^e+ Pn_E^-\cdot B(\nabla_{ \Gamma}u)^e \right)^2d\sigma_h\\
\lesssim &\int_{ E} \left((Pn_E^+  - n_{E^l}^+)\cdot B(\nabla_{ \Gamma}u)^e \right)^2d\sigma_h  + \\
&\int_{ E} \left((n_{E^l}^- -  Pn_E^-)\cdot B(\nabla_{ \Gamma}u)^e \right)^2d\sigma_h\\
\lesssim & h^4 \int_{ E} \left|(\nabla_{ \Gamma}u)^e \right|^2d\sigma_h\\
\lesssim & h^4 \int_{E^l} \left|\nabla_{ \Gamma}u \right|^2 d\sigma,
\end{split}
\end{equation}
where we have used \eqref{equ:ineqfive} in the second inequality and norm equivalence \eqref{equ:edgeh1} in the last inequality.

Summing over over all $E\in \mathcal{E}_h$ and applying the trace inequality,  we have 
  \begin{equation}
 \begin{split}
&\sum_{E\in\mathcal{E}_h}\int_{ E} \left(n_E^+ \cdot \nabla_{ \Gamma_h}^+u^e+ n_E^-\cdot \nabla_{\Gamma_h}^-u^e \right)^2d\sigma_h\\
\lesssim & h^4 \sum_{E\in\mathcal{E}_h} \int_{E^l} \left|\nabla_{ \Gamma}u \right|^2 d\sigma\\
\lesssim & h^4\sum_{T\in\mathcal{T}_h}\int_{\partial T^l} \left|\nabla_{ \Gamma}u \right|^2 d\sigma\\
\lesssim & h^4\sum_{T\in \mathcal{T}_h} \left( h^{-1} \|\nabla u\|^2_{L^2(T^l)} + h |\nabla u|^2_{H^1(T^l)}\right)\\
\lesssim & h^3  \|u\|^2_{H^2(\Gamma)},
\end{split}
\end{equation}
which completes our proof. 
\end{proof}

In the next Lemma, we estimate the main term in the nonconforming consistency error by using an argument 
analogous to the Crouzeix-Raviart element in planar domain \cite{Br2007}.
\begin{lemma}\label{lem:nc}
 Let $u$ be the solution of \eqref{equ:var} and $u^e$ be its extension to $U$ defined by \eqref{equ:extension}. 
 Then we have the following error estimates holds 
 \begin{equation} \label{equ:nc}
 \begin{split}
 \sum_{E\in\mathcal{E}_h}\int_{ E} n_E^+\cdot \nabla_{\Gamma_h}^+u^e \llbracket w_h \rrbracket d\sigma_h
 \lesssim h|u^e|_{H^2(\Gamma)} |w_h|_{H^1(\Gamma_h; \mathcal{T}_h)}.
\end{split}
 \end{equation}
for any $w_h \in V_h$. 
\end{lemma}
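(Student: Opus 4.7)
The plan is to adapt the classical Crouzeix--Raviart consistency argument to the polyhedral surface setting, exploiting only the edge-average condition that defines $V_h$ so that the geometric error enters at most through the norm equivalences established earlier.

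The first step is to symmetrize each edge integral against a constant. For every $E\in\mathcal{E}_h$ let
\begin{equation*}
\bar q_E := \frac{1}{|E|}\int_E n_E^+\cdot \nabla_{\Gamma_h}^+ u^e\, d\sigma_h.
\end{equation*}
Because $w_h\in V_h$ satisfies $\int_E \llbracket w_h\rrbracket\, d\sigma_h = 0$, subtracting $\bar q_E$ from the first factor does not change the integral, so
\begin{equation*}
\int_E n_E^+\cdot \nabla_{\Gamma_h}^+ u^e\, \llbracket w_h\rrbracket\, d\sigma_h
= \int_E \bigl(n_E^+\cdot \nabla_{\Gamma_h}^+ u^e - \bar q_E\bigr)\, \llbracket w_h\rrbracket\, d\sigma_h.
\end{equation*}
Apply the Cauchy--Schwarz inequality on $E$ to the right-hand side.

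The second step is the two one-dimensional Poincar\'e estimates on $E$. Since $n_E^+\cdot \nabla_{\Gamma_h}^+ u^e - \bar q_E$ has zero mean on $E$, the scaled Poincar\'e inequality on the edge together with the trace inequality on the flat triangle $T^+$ (valid uniformly by shape regularity of $\mathcal{T}_h$) gives
\begin{equation*}
\|n_E^+\cdot \nabla_{\Gamma_h}^+ u^e - \bar q_E\|_{L^2(E)} \lesssim h^{1/2}\,|u^e|_{H^2(T^+)}.
\end{equation*}
For the jump, write $\llbracket w_h\rrbracket = (w_h|_{T^+}-c) - (w_h|_{T^-}-c)$ with $c$ the common midpoint value; both pieces are linear on their triangles with zero edge-mean, so scaling yields
\begin{equation*}
\|\llbracket w_h\rrbracket\|_{L^2(E)} \lesssim h^{1/2}\bigl(|w_h|_{H^1(T^+)} + |w_h|_{H^1(T^-)}\bigr).
\end{equation*}

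The third and final step is to sum over $E\in\mathcal{E}_h$, using discrete Cauchy--Schwarz and shape regularity (so each triangle contributes to only $O(1)$ edges). This yields
\begin{equation*}
\sum_{E\in\mathcal{E}_h}\int_E n_E^+\cdot \nabla_{\Gamma_h}^+ u^e\,\llbracket w_h\rrbracket\, d\sigma_h \lesssim h\,|u^e|_{H^2(\Gamma_h)}\,|w_h|_{H^1(\Gamma_h;\mathcal{T}_h)},
\end{equation*}
and the norm equivalences \eqref{equ:elemh2}--\eqref{equ:elemh22} then convert $|u^e|_{H^2(\Gamma_h)}$ into a bound by $|u^e|_{H^2(\Gamma)}$ (equivalently $\|u\|_{H^2(\Gamma)}$), giving \eqref{equ:nc}.

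The step I expect to require most care is the trace/Poincar\'e bound for $n_E^+\cdot \nabla_{\Gamma_h}^+ u^e$: the function being traced is the tangential gradient on the flat triangle $T^+$ of the \emph{extended} function $u^e$, not of a function intrinsically defined on $\Gamma_h$. So one must check that $H^2$-regularity of $u^e$ in the tubular direction is controlled by $H^2$-regularity of $u$ on $\Gamma$, which is exactly the content of the norm equivalence in the previous subsection. Once that identification is made, the remainder is the standard planar Crouzeix--Raviart computation with constants independent of $h$ by quasi-uniformity.
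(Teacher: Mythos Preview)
Your proposal is correct and follows essentially the same route as the paper: both exploit $\int_E\llbracket w_h\rrbracket\,d\sigma_h=0$ to subtract a constant from the conormal-derivative factor, then combine a trace/Poincar\'e estimate on the flat triangle with the standard jump bound. The only cosmetic difference is that the paper subtracts the CR interpolant $\Pi_h u^e$ (whose tangential gradient is piecewise constant) and writes the intermediate bound as $\int_E|\nabla_{\Gamma_h}^+(u^e-\Pi_h u^e)|^2\,d\sigma_h\lesssim h\,|u^e|_{H^2(T^+)}^2$, whereas you subtract the edge mean $\bar q_E$ directly; since the edge mean minimizes the $L^2(E)$ distance to constants, your bound is immediately implied by the paper's and vice versa.
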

\begin{proof} Let $\Pi_E^0w_h = \frac{1}{|E|} \int_Ew_hd\sigma_h$.  Using the fact $\llbracket \Pi_E^0w_h \rrbracket =0$ and  the Cauchy Schwartz inequality,  we have 
 \begin{equation}\label{equ:ncone}
 \begin{split}
& \sum_{E\in\mathcal{E}_h}\int_{ E} n_E^+\cdot \nabla_{\Gamma_h}^+u^e \llbracket w_h \rrbracket d\sigma_h\\
= & \sum_{E\in\mathcal{E}_h}\int_{ E} n_E^+\cdot \nabla_{\Gamma_h}^+u^e \llbracket w_h -\Pi_E^0w_h  \rrbracket d\sigma_h\\
= & \sum_{E\in\mathcal{E}_h}\int_{ E} n_E^+\cdot \nabla_{\Gamma_h}^+(u^e - \Pi_hu^e) \llbracket w_h -\Pi_E^0w_h  \rrbracket d\sigma_h\\
= & \sum_{E\in\mathcal{E}_h}\left( \int_{ E} |\nabla_{\Gamma_h}^+(u^e - \Pi_hu^e) |^2 d\sigma_h\right)^{1/2} \left( \int_{ E} \llbracket w_h -\Pi_E^0w_h  \rrbracket^2 d\sigma_h \right)^{1/2}
 \end{split}
 \end{equation}
 Arguing similarly using the trace inequality, the Poincare's inequality and  \eqref{equ:localinterr} as in planar domain \cite{Br2007}, we obtain
 \begin{align}
&\int_{ E} |\nabla_{\Gamma_h}^+(u^e - \Pi_hu^e) |^2 d\sigma_h\lesssim h |u|^2_{H^2(T^+)}, \label{equ:nctwo}\\
 &\int_{ E} \llbracket w_h -\Pi_E^0w_h  \rrbracket^2 d\sigma_h \lesssim   h \left(|w_h|^2_{H^1(T^+)} + |w_h|^2_{H^1(T^-)} \right). \label{equ:ncthree}
 \end{align}
 Combing the estimates \eqref{equ:ncone}--\eqref{equ:ncthree} gives \eqref{equ:nc}.
\end{proof}

Now, we are prepared  to prove the nonconforming consistency error:
\begin{lemma}\label{lem:mainnc}
 Let $u$ be the solution of \eqref{equ:var} and $u^e$ be its extension to $U$ defined by \eqref{equ:extension}. 
 Then we have the following error estimates holds 
 \begin{equation}\label{equ:mainnc}
|a_h(u^e, w_h) - (f^e, w_h)_{\Gamma_h}| \lesssim h \|u\|_{H^2(\Gamma)}\|w_h\|_h.
\end{equation}
for any $w_h \in V_h$. 
\end{lemma}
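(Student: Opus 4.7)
The plan is to bring the right-hand side back to $\Gamma_h$ via the pointwise lifted PDE, integrate by parts element by element, and then decompose the resulting edge integrals into a classical nonconforming jump contribution and a purely geometric ``conormal mismatch'' contribution. The three pieces will then be controlled by the consistency estimate \eqref{equ:ineqsix}, Lemma~\ref{lem:nc}, and Lemma~\ref{lem:geonc}, respectively.

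Concretely, since $-\Delta_\Gamma u + u = f$ on $\Gamma$, extending along $p$ gives the pointwise identity $f^e = u^e - (\Delta_\Gamma u)^e$ on $\Gamma_h$. Substituting this into the target difference causes the $L^2$ mass term in $a_h$ to cancel, leaving
\begin{equation*}
a_h(u^e, w_h) - (f^e, w_h)_{\Gamma_h} = \sum_{T \in \mathcal{T}_h} \int_T \nabla_{\Gamma_h} u^e \cdot \nabla_{\Gamma_h} w_h \, ds_h + ((\Delta_\Gamma u)^e, w_h)_{\Gamma_h}.
\end{equation*}
Tangential integration by parts on each flat triangle $T$ (with outward conormal $n_E^{\pm}$ on each bounding edge $E$) converts the first sum into $-\sum_T \int_T (\Delta_{\Gamma_h} u^e) w_h \, ds_h$ plus a sum of edge integrals. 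The bulk remainder is then $\sum_T \int_T ((\Delta_\Gamma u)^e - \Delta_{\Gamma_h} u^e) w_h \, ds_h$, which by \eqref{equ:ineqsix} and Cauchy--Schwarz is bounded by $h \|u\|_{H^2(\Gamma)} \|w_h\|_{L^2(\Gamma_h)}$.

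For the edge sum I would use $w_h^+ = w_h^- + \llbracket w_h \rrbracket$ to split each integrand as
\begin{equation*}
n_E^+ \cdot \nabla_{\Gamma_h}^+ u^e \, \llbracket w_h \rrbracket + \bigl( n_E^+ \cdot \nabla_{\Gamma_h}^+ u^e + n_E^- \cdot \nabla_{\Gamma_h}^- u^e \bigr) w_h^-.
\end{equation*}
The first term is precisely the object estimated in Lemma~\ref{lem:nc}, yielding a bound of order $h \|u\|_{H^2(\Gamma)} \|w_h\|_h$. For the second term, Cauchy--Schwarz separates off a factor that Lemma~\ref{lem:geonc} controls by $h^{3/2} \|u\|_{H^2(\Gamma)}$ and a factor $(\sum_E \int_E |w_h^-|^2 \, d\sigma_h)^{1/2}$ that the discrete trace inequality on each triangle bounds by $h^{-1/2} \|w_h\|_h$; their product is again of the desired order $h \|u\|_{H^2(\Gamma)} \|w_h\|_h$.

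The hard part, and what distinguishes the analysis from the planar case of~\cite{Br2007}, is the second piece of the edge decomposition. In a flat triangulation one has $n_E^+ + n_E^- = 0$ and the term vanishes identically, whereas on the polyhedral approximation of $\Gamma$ the two conormals lie in different tangent planes and their sum is only $O(h^2)$ pointwise. Lemma~\ref{lem:geonc} quantifies this mismatch and supplies exactly the $h^{3/2}$ factor needed to compensate the $h^{-1/2}$ loss coming from the trace inequality on $w_h^-$, so that the geometric error does not degrade the final rate.
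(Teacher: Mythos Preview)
Your proof is correct and follows essentially the same route as the paper: elementwise integration by parts, the same edge decomposition into the jump term handled by Lemma~\ref{lem:nc} and the conormal-mismatch term handled by Lemma~\ref{lem:geonc} together with the trace inequality, and the bulk Laplace--Beltrami discrepancy via \eqref{equ:ineqsix}. Your direct substitution $f^e = u^e - (\Delta_\Gamma u)^e$ is in fact slightly cleaner than the paper's detour through $(f, w_h^l)$, which generates two extra $\mu_h$-based terms (the paper's $I_2$ and part of $I_4$) requiring Lemma~\ref{lem:geo}; otherwise the arguments coincide.
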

\begin{proof}
 For any $w_h\in V_h$, we notice that 
 \begin{equation}
\begin{split}
   &a_h(u^e, w_h) -  (f^e, w_h)_{\Gamma_h}  = [a_h(u^e, w_h) - (f, w_h^l)] + [(f, w_h^l)  - (f^e, w_h)_{\Gamma_h}].\\
 \end{split}
\end{equation}
Using \eqref{equ:geoerrlf},  the second term can be estimated as 
 \begin{equation}
|(f,  w_h^l)- (f^e, w_h)_{\Gamma_h} | \lesssim h^2 \|f\|_{L^2(\Gamma)}\|w_h\|_{L^2(\Gamma_h)} \lesssim h^2 \|u\|_{H^2(\Gamma)}\|w_h\|_h .
\end{equation}
where we used the fact $f = - \Delta_{\Gamma} u + u$. 

To estimate the first term, we apply the Green's formula and we obtain that  
\begin{equation}
\begin{split}
  &a_h(u^e, w_h) - (f, w_h^l) \\
     = &\sum_{E\in\mathcal{E}_h}\int_{ E}\left( n_E^+ \cdot \nabla_{\Gamma_h}^+u^e  w_h^+ + n_E^- \cdot  \nabla_{\Gamma_h}^- u^e w_h^- \right)d\sigma_h -\\
     & \sum_{T\in\mathcal{T}_h}(\Delta_{\Gamma_h}u^e, w_h)_T + (\Delta_{\Gamma}u, w_h^l) + (u^e, w_h)_{\Gamma_h} - (u, w_h^l)\\
     =& \sum_{E\in\mathcal{E}_h}\int_{E}\left(n_E^+ \cdot \nabla_{ \Gamma_h}^+u^e+ n_E^-\cdot \nabla_{\Gamma_h}^-u^e \right)w_h^-d\sigma_h +  \left [   (u^e, w_h)_{\Gamma_h} - (u, w_h^l) \right]\\
     & \sum_{E\in\mathcal{E}_h}\int_{E} n_E^+\cdot \nabla_{\Gamma_h}^+u^e  \llbracket w_h \rrbracket \sigma_h 
      +  \left[ (\Delta_{\Gamma}u, w_h^l) - \sum_{T\in\mathcal{T}_h}(\Delta_{\Gamma_h}u^e, w_h)_T \right] \\
      = &I_1 + I_2 + I_3 + I_4.
\end{split}
\end{equation}

To estimate $I_1$,  we use Lemma \ref{lem:geonc}, the Cauchy-Schwartz  inequality, and the trace inequality   to get
\begin{equation*}
\begin{split}
|I_1| \le &\left( \sum_{E\in \mathcal{E}_h} \int_{E}\left(n_E^+ \cdot \nabla_{ \Gamma_h}^+u^e+ n_E^-\cdot \nabla_{\Gamma_h}^-u^e \right)^2d\sigma_h\right)^{1/2} \left( \sum_{E\in \mathcal{E}_h} \int_E ( w_h^-)^2d\sigma_h\right)^{1/2}\\
  \lesssim & h^{3/2} \|u\|_{H^2(\Gamma)}\left( h^{-1/2}\|w_h\|_{L^2(\Gamma_h)}+ h^{1/2}|w_h|_{H^1(\Gamma_h; \mathcal{T}_h)}\right) \\
  \lesssim & h\|u\|_{H^2(\Gamma)} \|w_h\|_h.
\end{split}
\end{equation*}

According to Lemma  \ref{lem:geo} and Lemma \ref{lem:geonc}, we have 
\begin{equation*}
\begin{split}
|I_2| + |I_3|   \lesssim & h\|u\|_{H^2(\Gamma)} \|w_h\|_h.
\end{split}
\end{equation*}

Then, we estimate $I_4$.  By the triangle inequality and  the error estimate \eqref{equ:ineqsix} and \eqref{equ:ineqfour}, we have 
\begin{equation*}
\begin{split}
|I_4| = & |\sum_{T\in\mathcal{T}} (\Delta_{\Gamma}u, w_h^l)_{T^l} - \sum_{T\in\mathcal{T}_h}(\Delta_{\Gamma_h}u^e, w_h)_T|\\
\le & \sum_{T\in\mathcal{T}} \left| (\mu_h(\Delta_{\Gamma}u)^e, w_h^l)_{T} - (\Delta_{\Gamma_h}u^e, w_h)_T\right|\\
\le & \sum_{T\in\mathcal{T}} |((\mu_h-1)(\Delta_{\Gamma}u)^e, w_h)_{T}| +    \sum_{T\in\mathcal{T}} |((\Delta_{\Gamma}u)^e -\Delta_{\Gamma_h}u^e , w_h)_{T} |\\
\lesssim & h\|u\|_{H^2(\Gamma)} \|w_h\|_{L^2(\Gamma_h)} .
\end{split}
\end{equation*}

Summing the  above three error estimates , we complete the proof of \eqref{equ:mainnc}.
\end{proof}

With all the previous preparations, we are in perfect position to prove the following energy error estimate
\begin{theorem}\label{thm:eng}
 Let $u$ be the solution of \eqref{equ:var} and $u^e$ be its extension to $U$ defined by \eqref{equ:extension}. 
 Then we have the following error estimates holds 
 \begin{equation}
\|u^e - u_h\|_h \lesssim h\|f\|_{L^2(\Gamma)}. 
\end{equation}
\end{theorem}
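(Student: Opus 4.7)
The plan is to apply the second Strang Lemma (Lemma \ref{lem:strang}), which splits the total error into an approximation error and a nonconforming consistency error, and then to bound each piece using results already established in the excerpt. Since both pieces will be shown to be of order $h$ times a norm of $u$, the final inequality will follow from the regularity estimate \eqref{equ:reg}.

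First I would handle the approximation term $\inf_{v_h \in V_h}\|u^e - v_h\|_h$. The natural candidate is $v_h = \Pi_h u^e$, and the interpolation bound \eqref{equ:inferror} immediately gives $\inf_{v_h \in V_h}\|u^e - v_h\|_h \lesssim h\,|u^e|_{H^2(\Gamma_h)}$. To pass from $u^e$ on $\Gamma_h$ to $u$ on $\Gamma$, I would sum the element-wise norm equivalence \eqref{equ:elemh2} over $T \in \mathcal{T}_h$, which yields $|u^e|_{H^2(\Gamma_h)} \lesssim \|u\|_{H^2(\Gamma)}$. This step is essentially bookkeeping once the lemmas are in place.

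For the consistency term, the work has already been done in Lemma \ref{lem:mainnc}, which gives directly
\begin{equation*}
\sup_{w_h\in V_h}\frac{|a_h(u^e, w_h) - (f^e, w_h)_{\Gamma_h}|}{\|w_h\|_h} \lesssim h\,\|u\|_{H^2(\Gamma)}.
\end{equation*}
Combining this with the approximation bound via the second Strang Lemma yields $\|u^e - u_h\|_h \lesssim h\,\|u\|_{H^2(\Gamma)}$.

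To finish, I would invoke the regularity estimate \eqref{equ:reg}, namely $\|u\|_{H^2(\Gamma)} \lesssim \|f\|_{L^2(\Gamma)}$, which absorbs the $H^2$ seminorm into $\|f\|_{L^2(\Gamma)}$ and produces the stated bound. There is no real obstacle here; the serious analytic content, that is, controlling the geometric and nonconforming pieces of the consistency error simultaneously, has already been absorbed into Lemmas \ref{lem:geo}, \ref{lem:geonc}, \ref{lem:nc}, and \ref{lem:mainnc}. The proof of the theorem itself is therefore just an assembly step: apply Lemma \ref{lem:strang}, insert \eqref{equ:inferror} with \eqref{equ:elemh2} for the first term, insert \eqref{equ:mainnc} for the second, and close with \eqref{equ:reg}.
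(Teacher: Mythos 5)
Your proposal is correct and follows essentially the same route as the paper: the second Strang Lemma, the interpolation bound \eqref{equ:inferror} for the approximation term, Lemma \ref{lem:mainnc} for the consistency term, and the regularity estimate \eqref{equ:reg} to close. The only addition is your explicit use of the norm equivalence \eqref{equ:elemh2} to convert $|u^e|_{H^2(\Gamma_h)}$ into $\|u\|_{H^2(\Gamma)}$, a bookkeeping step the paper leaves implicit.
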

\begin{proof} Using Lemma  \ref{lem:mainnc} and the regularity estimate \eqref{equ:reg}, we obtain that 
    \begin{equation}\label{equ:supnc}
 \sup_{w_h \in V_h} \frac{|a_h(u^e, w_h) - \ell(w_h^l)|}{ \|w_h\|_{h}} \lesssim h\|f\|_{L^2(\Gamma)}.
  \end{equation}
  We complete our proof by combining the Strang Lemma \ref{lem:strang} and the estimates \eqref{equ:inferror} and \eqref{equ:supnc}. 
\end{proof}

\subsection{$L_2$ error estimate} In this subsection, we establish a priori error estimate in $L^2$ norm  using the Abuin-Nitsche's trick \cite{Ci2002, BS2008, Br2007}. 
Let $g = u - u^l_h \in L^2(\Gamma)$.  The dual problem is to find $\phi\in H^1(\Gamma)$ such that 
\begin{equation}\label{equ:dual}
a(v, \phi) = (v, g), \quad \forall v\in H^1(\Gamma). 
\end{equation}
Similarly, we have the following regularity result:
\begin{equation}
\|\phi\|_{H^2(\Gamma)} \le \|g\|_{L^2(\Gamma)}. 
\end{equation}

The surface Crouzeix-Raviart element discretization of the dual problem is to find $\phi_h\in V_h$ such that 
\begin{equation}\label{equ:ddual}
a_h(v_h, \phi_h) = (v_h, g^e), \quad \forall v_h\in V_h,
\end{equation}
where $g^e = (u-u^l_h)^e = u^e - u_h$.   By Theorem \ref{thm:eng}, we have the following energy error estimate
 \begin{equation}
\|\phi^e - \phi_h\|_h \lesssim  h\|g\|_{L^2(\Gamma)}. 
\end{equation}

We begin our $L^2$ error estimate with the following Lemma:
\begin{lemma} \label{lem:exact}
 Let $u$ be the solution of \eqref{equ:var} and $\phi$ be the solution of the dual problem \eqref{equ:dual}. Then we have 
 the following error estimate 
\begin{equation}  \label{equ:diserr}
a_h(u^e, \phi^e) - a(u, \phi) \lesssim h^2\|u\|_{H^2(\Gamma)}\|\phi\|_{H^2(\Gamma)}.
\end{equation}
\end{lemma}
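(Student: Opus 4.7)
\textbf{Proof proposal for Lemma \ref{lem:exact}.}

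My plan is to rewrite the exact bilinear form $a(u,\phi)$ as an integral on $\Gamma_h$ via the change of variables $ds = \mu_h \, ds_h$, then compare it termwise with $a_h(u^e, \phi^e)$. Using \eqref{equ:gradlift} to convert surface gradients on $\Gamma$ into tangential gradients on $\Gamma_h$, the whole estimate reduces to an $L^\infty$ bound on a single geometric matrix difference.

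First, I split the error into a mass part and a stiffness part. For the mass part,
\begin{equation*}
(u,\phi)_\Gamma - (u^e, \phi^e)_{\Gamma_h} = \int_{\Gamma_h}(\mu_h - 1)\, u^e \phi^e \, ds_h,
\end{equation*}
so \eqref{equ:ineqtwo} together with the norm equivalence \eqref{equ:eleml2} immediately yields an $O(h^2)\|u\|_{L^2(\Gamma)}\|\phi\|_{L^2(\Gamma)}$ bound. For the stiffness part, change variables on $\Gamma$ and use \eqref{equ:gradlift} to obtain
\begin{equation*}
(\nabla_\Gamma u,\nabla_\Gamma\phi)_\Gamma - \sum_{T\in\mathcal{T}_h}\int_T \nabla_{\Gamma_h}u^e\cdot\nabla_{\Gamma_h}\phi^e\, ds_h = \int_{\Gamma_h}(\nabla_\Gamma u)^e\cdot (\mu_h P - B^T P_h B)(\nabla_\Gamma\phi)^e\, ds_h,
\end{equation*}
using $P(\nabla_\Gamma u)^e=(\nabla_\Gamma u)^e$ and $(P_h B)^T(P_h B)=B^T P_h B$.

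The technical heart of the argument, and the main obstacle, is the geometric identity
\begin{equation*}
\|\mu_h P - B^T P_h B\|_{L^\infty(T)} \lesssim h^2.
\end{equation*}
Here one expands $B=P-dH$ and uses $PH=HP=H$ to get $B^TB=P-2dH+d^2H^2$, which is $P+O(h^2)$ by \eqref{equ:ineqone}. The remaining piece is $-B^T(n_h\otimes n_h)B$; since $Pn=0$ and $Hn=0$, one has $Bn_h = B(n_h-n)$, so \eqref{equ:ineqthree} combined with \eqref{equ:ineqone} gives $B^T n_h n_h^T B = O(h^2)$. Finally, $\mu_h P = P + (\mu_h-1)P = P+O(h^2)$ by \eqref{equ:ineqtwo}, and the three $O(h^2)$ contributions combine to yield the claim. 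I expect this expansion to be the only place where the geometric cancellation between $P-P_h$ (which is only $O(h)$) and the projection structure truly matters.

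With the matrix bound in hand, the stiffness part follows by Cauchy--Schwarz:
\begin{equation*}
\Bigl|\int_{\Gamma_h}(\nabla_\Gamma u)^e\cdot(\mu_h P-B^T P_h B)(\nabla_\Gamma \phi)^e\, ds_h\Bigr| \lesssim h^2 \|\nabla_\Gamma u\|_{L^2(\Gamma)}\|\nabla_\Gamma \phi\|_{L^2(\Gamma)},
\end{equation*}
where I pushed back to $\Gamma$ using \eqref{equ:eleml2}. Adding the mass estimate and absorbing the $H^1$ seminorms into the $H^2$ norms on the right-hand side completes the proof of \eqref{equ:diserr}.
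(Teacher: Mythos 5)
Your proposal is correct, and it is essentially the argument the paper has in mind: the paper's proof of Lemma \ref{lem:exact} is just a pointer to the standard technique for the conforming linear surface FEM in \cite{Dz1988}, which is exactly the change of variables $ds=\mu_h\,ds_h$ plus the gradient relation \eqref{equ:gradlift} and an $O(h^2)$ bound on the resulting geometric matrix error that you carry out. Your expansion $B P_h B = P - 2dH + d^2H^2 - (Bn_h)\otimes(Bn_h)$ together with $Bn=0$, $|n-n_h|\lesssim h$, $|d|\lesssim h^2$ and $|\mu_h-1|\lesssim h^2$ is the standard way this matrix bound is proved, so you have simply supplied the details the paper delegates to the literature.
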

\begin{proof}
 \eqref{equ:diserr} can be proved by using the same technique as in continuous linear surface finite element, see \cite{Dz1988}.
\end{proof}

Then we prove a lemma involving global interpolation $\Pi_h$.
\begin{lemma}\label{lem:agh}
 Let $u$ be the solution of \eqref{equ:var} and $\phi$ be the solution of the dual problem \eqref{equ:dual}. Then we have 
 the following error estimate 
\begin{align}
& a_h(u^e, \phi^e - \Pi_h\phi^e) \lesssim h^2\|u\|_{H^2(\Gamma)}\|\phi\|_{H^2(\Gamma)}, \label{equ:proja}\\
&a_h(u^e-  \Pi_h u^e, \phi^e )  \lesssim h^2 \|u\|_{H^2(\Gamma)}\|\phi\|_{H^2(\Gamma)}.\label{equ:projb}
\end{align}
\end{lemma}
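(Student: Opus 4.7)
The plan is to prove both estimates by performing an element-wise integration by parts (the surface Green's identity on each $T\in\mathcal{T}_h$) and then exploiting the defining orthogonality of the Crouzeix–Raviart interpolant, namely $\int_E (\phi^e-\Pi_{T}\phi^e)\,d\sigma_h=0$ for every edge $E$ of $T$. This orthogonality is what upgrades the expected $O(h)$ bound (which one would get from Strang alone) to the sharper $O(h^2)$ needed for duality.

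Concretely, for \eqref{equ:proja} I would write
\begin{equation*}
a_h(u^e,\phi^e-\Pi_h\phi^e) = \sum_{T\in\mathcal{T}_h}\!\int_T\!(-\Delta_{\Gamma_h}u^e+u^e)(\phi^e-\Pi_h\phi^e)\,ds_h + \sum_{T\in\mathcal{T}_h}\!\int_{\partial T}\!(n_{\partial T}\!\cdot\!\nabla_{\Gamma_h}u^e)(\phi^e-\Pi_T\phi^e)\,d\sigma_h.
\end{equation*}
The volume part is bounded directly by Cauchy–Schwarz together with $\|\phi^e-\Pi_h\phi^e\|_{L^2(T)}\lesssim h^2\|\phi\|_{H^2(T^l)}$ from \eqref{equ:interr} and $\|u\|_{H^2(T^l)}$ controlling $\|-\Delta_{\Gamma_h}u^e+u^e\|_{L^2(T)}$. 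The boundary part is the delicate piece: on each interior edge $E=\partial T^+\cap\partial T^-$ I rewrite the sum of the two contributions using the zero-mean property on $E$. Replacing $n_E^\pm\!\cdot\!\nabla_{\Gamma_h}^\pm u^e$ by its deviation from its mean $\overline{q_E^\pm}$ on $E$ gives, by Cauchy–Schwarz,
\begin{equation*}
\bigl|\cdots\bigr|\le \sum_{\pm}\|n_E^\pm\!\cdot\!\nabla_{\Gamma_h}^\pm u^e-\overline{q_E^\pm}\|_{L^2(E)}\,\|\phi^e-\Pi_{T^\pm}\phi^e\|_{L^2(E)}.
\end{equation*}
A standard scaling/Bramble–Hilbert argument on the reference triangle gives the two factors $\lesssim h^{1/2}\|u\|_{H^2(T^\pm)}$ and $\lesssim h^{3/2}\|\phi\|_{H^2(T^\pm)}$ respectively (the second factor sharpened by the zero-mean property of the CR interpolation error on $E$). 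Multiplying yields $h^2$ per edge; summing and applying discrete Cauchy–Schwarz produces $h^2\|u\|_{H^2(\Gamma)}\|\phi\|_{H^2(\Gamma)}$.

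For \eqref{equ:projb} I would simply invoke symmetry: $a_h$ is a symmetric bilinear form, so
\begin{equation*}
a_h(u^e-\Pi_hu^e,\phi^e)=a_h(\phi^e,u^e-\Pi_hu^e),
\end{equation*}
and the argument above with the roles of $u$ and $\phi$ swapped delivers the same bound.

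The main obstacle I anticipate is the boundary analysis: unlike in the planar setting of \cite{Br2007}, the discrete conormals $n_E^\pm$ and the gradient lift via $B$ introduce a geometric mismatch, so one must check that the scaling estimate for $\|n_E^\pm\!\cdot\!\nabla_{\Gamma_h}^\pm u^e-\overline{q_E^\pm}\|_{L^2(E)}$ still reads as in the flat case. This is where the geometric estimates \eqref{equ:ineqthree}–\eqref{equ:ineqfive} together with the norm-equivalence bounds \eqref{equ:elemh22}, \eqref{equ:edgel2} should absorb the extra terms, confirming that the geometric error contributes only lower order and does not destroy the $O(h^2)$ rate.
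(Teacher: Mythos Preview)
Your approach coincides with the paper's: element-wise integration by parts, then use the Crouzeix--Raviart orthogonality $\int_E(\phi^e-\Pi_T\phi^e)\,d\sigma_h=0$ and bound the remaining volume term $(-\Delta_{\Gamma_h}u^e+u^e,\,\phi^e-\Pi_h\phi^e)_{\Gamma_h}$ via Cauchy--Schwarz and the interpolation estimate \eqref{equ:interr}; for \eqref{equ:projb} the paper says ``can be proved similarly'', which is exactly your symmetry remark.

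The one place you are actually more careful than the paper is the boundary term. The paper cites \eqref{equ:eqdef} and then writes the identity with the boundary contribution already removed, passing straight to $-(\Delta_{\Gamma_h}u^e,\phi^e-\Pi_h\phi^e)+(u^e,\phi^e-\Pi_h\phi^e)$. Strictly speaking, the zero-mean property only kills the edge average of the conormal flux; since $n_E\!\cdot\!\nabla_{\Gamma_h}u^e$ is not constant on $E$, a residual boundary term remains. Your mean-subtraction argument, yielding the $h^{1/2}\cdot h^{3/2}=h^2$ edge estimate, is precisely what is needed to close that gap, and your appeal to \eqref{equ:ineqthree}--\eqref{equ:ineqfive} and the edge norm equivalences is the right way to verify that the surface geometry does not spoil the flat-case scaling. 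So your proposal is correct and, on this point, slightly more complete than what the paper records.
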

\begin{proof}
 We only give a proof of \eqref{equ:proja} and \eqref{equ:projb} can be proved similarly.  To prove \eqref{equ:proja},  we apply the integration by part formula and 
 use \eqref{equ:eqdef} which gives 
 \begin{equation*}
\begin{split}
a_h(u^e, \phi^e - \Pi_h\phi^e) =& \sum_{T\in\mathcal{T}_h} (\nabla_{\Gamma_h}u^e, \nabla_{\Gamma_h}(\phi^e - \Pi_h\phi^e)) + (u^e, \phi^e - \Pi_h\phi^e)\\
= & -(\Delta_{\Gamma_h}u^e,\phi^e - \Pi_h\phi^e) + (u^e, \phi^e - \Pi_h\phi^e)
\end{split}
\end{equation*}
Then \eqref{equ:proja} follows by the Cauchy-Schwartz inequality, the interpolation error estimate \eqref{equ:interr} and the norm equivalence. 
\end{proof}

Using the above Lemma, we can prove the following consistency error estimate:
\begin{lemma}
 Let $u$ be the solution of \eqref{equ:var} and $\phi$ be the solution of the dual problem \eqref{equ:dual}. Then we have 
 the following error estimate 
\begin{align}
& a_h(u^e, \phi^e - \phi_h) - ((f, \phi) - (f^e, \phi_h)_{\Gamma_h}) \lesssim h^2\|u\|_{H^2(\Gamma)}\|\phi\|_{H^2(\Gamma)}, \label{equ:cona}\\
&a_h(u^e - u_h, \phi^e ) - ((u, g) - (u_h, g^e)_{\Gamma_h})   \lesssim h^2 \|u\|_{H^2(\Gamma)}\|\phi\|_{H^2(\Gamma)}.\label{equ:conb}
\end{align}
\end{lemma}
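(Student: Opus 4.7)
The plan is to prove (\ref{equ:cona}) in detail; (\ref{equ:conb}) then follows from an entirely symmetric argument that exchanges the roles of the primal pair $(u,f)$ and the dual pair $(\phi,g)$, using the symmetry of $a_h$, the discrete dual relation $(v_h,g^e)_{\Gamma_h}=a_h(\phi_h,v_h)$, and the fact that $\phi$ satisfies the primal-type equation $-\Delta_\Gamma\phi+\phi=g$.

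The first step is to apply the weak equations $(f,\phi)=a(u,\phi)$ and $(f^e,\phi_h)_{\Gamma_h}=a_h(u_h,\phi_h)$, and regroup the left-hand side of (\ref{equ:cona}) as
\begin{equation*}
\bigl[a_h(u^e,\phi^e)-a(u,\phi)\bigr]-\bigl[a_h(u^e,\phi_h)-(f^e,\phi_h)_{\Gamma_h}\bigr].
\end{equation*}
The first bracket is $O\bigl(h^2\|u\|_{H^2}\|\phi\|_{H^2}\bigr)$ by Lemma \ref{lem:exact}, so everything reduces to showing that the consistency residual of $u$ tested against the discrete dual solution $\phi_h$ is also $O(h^2)$. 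A direct application of Lemma \ref{lem:mainnc} only gives $O(h)$, so one extra power of $h$ has to be extracted.

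The extra power comes from splitting $\phi_h=\Pi_h\phi^e+(\phi_h-\Pi_h\phi^e)$. The fluctuation $\phi_h-\Pi_h\phi^e$ lies in $V_h$, and the triangle inequality together with Theorem \ref{thm:eng} applied to the dual problem, the interpolation bound (\ref{equ:interr}), and the equivalence $\|g\|_{L^2(\Gamma)}\simeq\|\phi\|_{H^2(\Gamma)}$ (which follows from elliptic regularity and $g=-\Delta_\Gamma\phi+\phi$) give $\|\phi_h-\Pi_h\phi^e\|_h\lesssim h\|\phi\|_{H^2}$; feeding this into Lemma \ref{lem:mainnc} yields the desired $O(h^2)$. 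The interpolation piece $a_h(u^e,\Pi_h\phi^e)-(f^e,\Pi_h\phi^e)_{\Gamma_h}$ is handled by inserting $\pm a_h(u^e,\phi^e)$ and $\pm(f^e,\phi^e)_{\Gamma_h}$ and using $a(u,\phi)=(f,\phi)$ to obtain the identity
\begin{equation*}
-a_h(u^e,\phi^e-\Pi_h\phi^e)+\bigl[a_h(u^e,\phi^e)-a(u,\phi)\bigr]+\bigl[(f,\phi)-(f^e,\phi^e)_{\Gamma_h}\bigr]+(f^e,\phi^e-\Pi_h\phi^e)_{\Gamma_h},
\end{equation*}
whose four pieces are $O(h^2)$ by Lemma \ref{lem:agh}, Lemma \ref{lem:exact}, Lemma \ref{lem:geo}, and the Cauchy--Schwarz inequality combined with the $L^2$ half of (\ref{equ:interr}) and $\|f\|_{L^2(\Gamma)}\lesssim\|u\|_{H^2(\Gamma)}$, respectively.

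The main obstacle is exactly this upgrade: Lemma \ref{lem:mainnc} by itself only produces an $O(h)$ bound for the consistency residual at $\phi_h$, and it is not a priori clear that this can be sharpened. The improvement rests on the duality-type observation that $\phi_h$ is itself an order-$h$ approximation to $\phi^e$ in the energy norm, so after the Ritz-type splitting $\phi_h=\Pi_h\phi^e+(\phi_h-\Pi_h\phi^e)$ only the fluctuation component needs the pessimistic $O(h)$ consistency bound, which is then multiplied by the extra $O(h)$ coming from $\|\phi_h-\Pi_h\phi^e\|_h$. Once this is seen, the rest of the proof is bookkeeping among geometric, interpolation, and exact--discrete form errors already controlled in the preceding lemmas.
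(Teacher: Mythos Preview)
Your proposal is correct and follows essentially the same approach as the paper: both arguments split $\phi_h=\Pi_h\phi^e+(\phi_h-\Pi_h\phi^e)$, gain the extra factor $h$ on the fluctuation part from Lemma~\ref{lem:mainnc} combined with $\|\phi_h-\Pi_h\phi^e\|_h\lesssim h\|\phi\|_{H^2(\Gamma)}$, and handle the interpolation part via Lemma~\ref{lem:agh}, Lemma~\ref{lem:geo}, and Cauchy--Schwarz with (\ref{equ:interr}). The only cosmetic difference is that you first extract $[a_h(u^e,\phi^e)-a(u,\phi)]$ using $(f,\phi)=a(u,\phi)$, which then reappears with the opposite sign in your decomposition of the interpolation piece; the paper instead writes the four-term decomposition $I_1+I_2+I_3+I_4$ directly, avoiding this redundant invocation of Lemma~\ref{lem:exact}.
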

\begin{proof}
To prove \eqref{equ:cona}, we notice that 
 \begin{equation*}
\begin{split}
 &a_h(u^e, \phi^e - \phi_h) - [(f, \phi) - (f^e, \phi_h)_{\Gamma_h}]\\
 =& [a_h(u^e, \Pi_h\phi^e - \phi_h) - (f^e,  \Pi_h\phi^e - \phi_h)_{\Gamma_h}] + a_h(u^e, \phi^e - \Pi_h\phi^e)  - \\
  & [(f, \phi) - (f^e, \phi^e)_{\Gamma_h} ]- (f^e, \phi^e-\Pi\phi^e)_{\Gamma_h}\\
  =& I_1 + I_2 + I_3 + I_4. 
\end{split}
\end{equation*}
We first estimate $I_1$.  Using Lemma \ref{lem:mainnc}, we obtain 
 \begin{equation*}
\begin{split}
|I_1| 
 \lesssim &h\|u\|_{H^2(\Gamma)} \|\Pi_h\phi^e - \phi_h\|_h\\
  \lesssim &h\|u\|_{H^2(\Gamma)} (\|\Pi_h\phi^e - \phi^e\|_h+\|\phi^e - \phi_h\|_h)\\
 \lesssim &h^2\|u\|_{H^2(\Gamma)}\|\phi\|_{H^2(\Gamma)},
 \end{split}
\end{equation*}

According to Lemma \ref{lem:agh} and Lemma \ref{lem:geo}, we have 
 \begin{equation*}
|I_2| + |I_3| \lesssim h^2\|f\|_{L^2(\Gamma)} \|g\|_{L^2(\Gamma)}. 
\end{equation*}

To estimate $I_4$, we use the Cauchy-Schwartz inequality and \eqref{equ:interr} which yields that 
 \begin{equation*}
|I_4| \le \|f^e\|_{L^2(\Gamma_h)} \|\phi^e-\Pi\phi^e\|_h \lesssim h^2 \|f\|_{L^2(\Gamma)}\|g\|_{L^2(\Gamma)}.
\end{equation*}

Summing all the above error estimates concludes the proof \eqref{equ:cona}.  The error estimate \eqref{equ:conb} can be proved in the same way. 
\end{proof}

Now, we are ready to present our error estimate in $L^2$ norm. 
\begin{theorem}\label{thm:l2err}
 Let $u$ be the solution of \eqref{equ:var} and $u^e$ be its extension to $U$ defined by \eqref{equ:extension}. 
 Then we have the following error estimates holds 
 \begin{equation}
\|u^e - u_h\|_{L^2(\Gamma_h)} \le h^2\|f\|_{L^2(\Gamma)}. 
\end{equation}
\end{theorem}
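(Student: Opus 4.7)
The plan is to use the Aubin--Nitsche duality argument, with the extra difficulty of handling simultaneously the nonconforming consistency error and the geometric error coming from $\Gamma_h\neq\Gamma$. The set-up is already laid out in the excerpt: the dual solution $\phi$ satisfies $a(v,\phi)=(v,g)$ for $g = u - u_h^l$ with $\|\phi\|_{H^2(\Gamma)}\lesssim \|g\|_{L^2(\Gamma)}$; its discrete counterpart $\phi_h\in V_h$ satisfies $\|\phi^e-\phi_h\|_h \lesssim h\|g\|_{L^2(\Gamma)}$ by Theorem \ref{thm:eng} applied to the dual; and the bounds \eqref{equ:diserr}, \eqref{equ:cona}--\eqref{equ:conb} are at our disposal.

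First I would realize the $L^2$ error as a duality pairing on the discrete surface. Since $g^e = u^e - u_h$ by definition, $\|u^e-u_h\|_{L^2(\Gamma_h)}^2 = (u^e - u_h,\, g^e)_{\Gamma_h}$. Splitting
\[
(u^e - u_h,\, g^e)_{\Gamma_h} = \bigl[(u,g)_{\Gamma} - (u_h,\, g^e)_{\Gamma_h}\bigr] + \bigl[(u^e,\, g^e)_{\Gamma_h} - (u,g)_\Gamma\bigr],
\]
the second bracket is controlled by $h^2\|u\|_{L^2(\Gamma)}\|g\|_{L^2(\Gamma)}$ via the change of variables $(u,g)_\Gamma = (\mu_h u^e, g^e)_{\Gamma_h}$ and the geometric estimate \eqref{equ:ineqtwo}. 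This reduces the task to bounding the discrete pairing $(u,g) - (u_h, g^e)_{\Gamma_h}$.

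Next, \eqref{equ:conb} identifies this quantity with $a_h(u^e - u_h,\phi^e)$ up to an $O(h^2\|u\|_{H^2}\|\phi\|_{H^2})$ remainder. I would then split
\[
a_h(u^e - u_h,\, \phi^e) = a_h(u^e - u_h,\, \phi^e - \phi_h) + a_h(u^e - u_h,\, \phi_h).
\]
The first summand is bounded by $\|u^e - u_h\|_h\|\phi^e - \phi_h\|_h \lesssim h\|f\|_{L^2(\Gamma)}\cdot h\|g\|_{L^2(\Gamma)}$ using Theorem \ref{thm:eng} on both the primal and the dual problem. For the second summand, since $a_h(u_h,\phi_h) = (f^e,\phi_h)_{\Gamma_h}$ by \eqref{equ:fem}, it equals $a_h(u^e,\phi_h) - (f^e,\phi_h)_{\Gamma_h}$; subtracting \eqref{equ:cona} from \eqref{equ:diserr} and using $a(u,\phi) = (f,\phi)$ shows that this nonconforming consistency error tested against the discrete dual is itself $O(h^2\|u\|_{H^2}\|\phi\|_{H^2})$.

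Collecting all contributions gives $\|u^e - u_h\|_{L^2(\Gamma_h)}^2 \lesssim h^2\|f\|_{L^2(\Gamma)}\|g\|_{L^2(\Gamma)}$, and the norm equivalence \eqref{equ:eleml2} supplies $\|g\|_{L^2(\Gamma)} \lesssim \|u^e-u_h\|_{L^2(\Gamma_h)}$, so cancellation delivers the claimed $O(h^2)$ bound. The main subtlety, which is exactly what the preparatory lemmas are designed to absorb, is extracting one extra power of $h$ from the nonconforming consistency error when it is tested against the discrete dual $\phi_h$ rather than the continuous $\phi^e$; if one tried to use Lemma \ref{lem:mainnc} directly one would lose a power of $h$ and only recover an $O(h)$ bound, so the interaction between \eqref{equ:cona} and Lemma \ref{lem:exact} is the heart of the argument.
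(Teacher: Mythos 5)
Your argument is correct and is essentially the paper's own proof: the same Aubin--Nitsche duality on $\Gamma_h$, with the error split into the same five pieces (the geometric term $(u^e,g^e)_{\Gamma_h}-(u,g)$, the term handled by Lemma~\ref{lem:exact}, the product term $a_h(u^e-u_h,\phi^e-\phi_h)$ bounded via Theorem~\ref{thm:eng} on primal and dual, and the two consistency remainders \eqref{equ:cona}--\eqref{equ:conb}), followed by the same cancellation of $\|g\|_{L^2(\Gamma)}$ through the norm equivalence. The only difference is bookkeeping: you isolate $a_h(u^e-u_h,\phi_h)$ and rewrite it via \eqref{equ:fem}, \eqref{equ:diserr} and \eqref{equ:cona}, whereas the paper expands all terms at once into $I_1,\dots,I_5$, but the resulting estimates coincide.
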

\begin{proof}
Using \eqref{equ:var}, \eqref{equ:fem},  \eqref{equ:dual} and \eqref{equ:ddual}, we have  
 \begin{equation*}
\begin{split}
&\|u^e- u_h\|_{L^2(\Gamma_h)}^2 =  (u^e - u_h, g^e) _{\Gamma_h}\\
 = &(u^e, g^e)_{\Gamma_h} - (u, g) + (u, g) - (u_h, g^e)\\
 =& (u^e, g^e)_{\Gamma_h} - (u, g) + a(u, \phi) - a_h(u_h, \phi_h)\\
  =&[ (u^e, g^e)_{\Gamma_h} - (u, g) ]+ [ a(u, \phi) - a_h(u^e, \phi^e)]  +[ a_h(u^e, \phi^e)  - a_h(u_h, \phi_h)]\\
    =&[ (u^e, g^e)_{\Gamma_h} - (u, g) ]+ [ a(u, \phi) - a_h(u^e, \phi^e)]  +a_h(u^e-u_h, \phi^e-\phi_h) + \\
     & a_h(u_h, \phi^e-\phi_h) + a_h(u^e-u_h, \phi_h)\\
     =&[ (u^e, g^e)_{\Gamma_h} - (u, g) ] -  [ a(u, \phi) - a_h(u^e, \phi^e)]  +a_h(u^e-u_h, \phi^e-\phi_h) - \\
     & [a_h(u^e, \phi^e - \phi_h) - ((f, \phi) - (f^e, \phi_h)_{\Gamma_h})] -\\
     & [a_h(u^e - u_h, \phi^e ) - ((u, g) - (u_h, g^e)_{\Gamma_h})]\\
     =& I_1 + I_2 + I_3 + I_4 + I_5. 
 \end{split}
\end{equation*}

We first estimate $I_1$.  By applying the change of variable and \eqref{equ:ineqtwo}, we have 
 \begin{equation*}
|I_1| = ((1-\mu_h)u^e, g^e)_{\Gamma_h} \lesssim h^2 \|u\|_{L^2(\Gamma_h)}\|g\|_{L^2(\Gamma_h)} \lesssim h^2 \|f\|_{L^2(\Gamma)}\|g\|_{L^2(\Gamma)}.
\end{equation*}

The estimate of $I_2$ is provided by Lemma \ref{lem:exact} which implies 
 \begin{equation*}
|I_2| \lesssim h^2 \|f\|_{L^2(\Gamma)}\|g\|_{L^2(\Gamma)}.
\end{equation*}

To estimate $I_3$, we apply the Cauchy-Schwartz inequality which gives
 \begin{equation*}
|I_3| \lesssim h^2 \|u^e-u_h\|_h \|\phi^e - \phi_h\|_h \lesssim h^2\|f\|_{L^2(\Gamma)} \|g\|_{L^2(\Gamma)}
\end{equation*}

According to Lemma \ref{lem:agh} and Lemma \ref{lem:geo}, we have 
 \begin{equation*}
|I_4| + |I_5| \lesssim h^2\|f\|_{L^2(\Gamma)} \|g\|_{L^2(\Gamma)}. 
\end{equation*}

We complete the proof by combining all the above estimates. 
\end{proof}

\section{Superconvergent post-processing}  
    In this section, we generalize the parametric polynomial preserving recovery \cite{DG2017}
to the surface Crouzeix-Raviart element.

The key idea of parametric polynomial preserving recovery is to take an intrinsic view on a surface.  In that sense, a surface can be understood as a union of locally parametrized patches by Euclidean planar domains \cite{Le2013, dC1992}.  Let $g$ be the metric tensor 
of the surface $\Gamma$ and $\mathbf{r}: \Omega \subset \mathbb{R}^2  \rightarrow S \subset \Gamma$ be a local geometric mapping. 
Then the tangent gradient  operator $\nabla_{\Gamma}$ can be equivalently defined as 
\begin{equation}\label{equ:indef}
(\nabla_{\Gamma}u)\circ \mathbf{r} = \nabla \bar{u} (g\circ \mathbf{r})^{-1} \partial \mathbf{r}. 
\end{equation}
where  $\bar{u} = u \circ \mathbf{r}$ is the pull back of the function $u$ to the local planar parameter domain $\Gamma$, $\partial \mathbf{r}$
is the Jacobian of $ \mathbf{r}$, and 
\begin{equation}\label{equ:label}
g\circ \mathbf{r} = \partial \mathbf{r}(\partial \mathbf{r})^T. 
\end{equation}
Using the relation \eqref{equ:label}, we can rewrite  \eqref{equ:indef} as
\begin{equation}\label{equ:newindef}
(\nabla_{\Gamma}u)\circ \mathbf{r} = \nabla \bar{u}  (\partial\mathbf{r})^\dag, 
\end{equation}
where $(\partial\mathbf{r})^\dag$ denotes the Moore-Penrose inverse of $\partial\mathbf{r}$.
As proved in \cite{DG2017}, the definition of the tangent gradient  \eqref{equ:indef} is invariant under different chosen of regular isomorphic parametrization function $\mathbf{r}$.

Then our goal is to use this intrinsic definition of the tangent gradient to propose a new gradient recovery method for the surface Crouzeix-Raviart element.  Different from the linear surface element,  the degrees of freedom of the surface Crouzeix-Raviart element are located on the edge midpoints of the approximate surface triangle instead of their vertices.  We follow the idea of the gradient recovery method for the Crouzeix-Raviart element in \cite{GZ2015} and define the gradient recovery operator $G_h: V_h \rightarrow V_h$.   Given a   finite element function $u_h\in V_h$,  we only need to define $(G_hu_h)(x_i)$ for all  $x_i \in \mathcal{M}_h$.

For any $x_i  = m_{E_i}\in \mathcal{M}_h$ and $n\in \mathbb{N}$,    define the union of elements around $x_i$ in the first $n$ layers as follows
\begin{equation}
L(x_i, n) = \bigcup\left\{ T:  T\in \mathcal{T}_h \text{ and } T\cap L(x_i, n-1) \in \mathcal{E}_h \right\},
\end{equation}
with  $L(x_i, 0) = \left\{E_i \right\}$.   Let $\Omega_i = L(x_i, n_i)$  with $n_i$ being  the smallest integer such that  $\Omega_i$ satisfies   the rank condition (see \cite{ZN2005}) in the following sense:

\begin{define}
 The local element patch is said to satisfy the rank condition i if it admits a unique least-squares fitted polynomial   in  \eqref{equ:appsurf} and \eqref{equ:appfem}.
\end{define}

%Definition 4.1. A selected vertices set B(xi) is said to satisfy the rank condition of the PPR or PPPR if it admits a unique least-squares 􏰕tted polynomial pi in (4.1) or si and pi in (4.3) and (4.4) respectively.

To construct the recovered gradient at the given midpoint $x_i$, we first choice a vector $\phi_i^3$ to be the normal vector of the local coordinate system.  For the sake of  simplicity, we choose  $\phi_3^i = (n_{E_i}^+ + n_{E_i}^-)/2$.   Then we construct a local parameter domain orthogonal to $\phi^{i}_3$.  Select $x_i$ as the original of $\Omega_i$ and choose $(\phi_1^i, \phi_2^i)$ as the normal basis of $\Omega_i$.   We project all the midpoints  in $L_i$ onto the parameter domain $\Omega_i$ and the projections is denoted by $\xi_{i_j}$, for $j=0, \cdots, n_i$.

Then, we reconstruct the local approximation surface $S_i$ over $\Omega_i$.   As in \cite{DG2017},  the approximate surface $S_i$ can be approximated by 
graph of  a quadratic function on $\Omega_i$.  That is $S_i = \tilde{\mathbf{r}}_{h,i}(\Omega_i)  = \cup_{\xi\in\Omega_i}(\xi, s_i(\xi))$, where 
\begin{equation}\label{equ:appsurf}
s_i = \arg \min_{s\in \mathbb{P}_2(\Omega_i)} \sum_{j = 1}^{n_i} |s(\xi_{i_j}) - <x_{i_j}, \phi_3^i>|^2,
\end{equation}
where $<\cdot, \cdot>$ means the Euclidean inner product in $\mathbb{R}^3$.

Our next step is to reconstruct a more accurate gradient for $\nabla \bar{u}_h$ on the parameter domain $\Omega_i$.  To do this, we use 
$\xi_{i_j}$ as sampling points and fit a quadratic polynomial $p_i(\xi)$ over $\Omega_i$ in the least-squares sense
\begin{equation}\label{equ:appfem}
p_i = \arg\min_{p\in \mathbb{P}_2(\Omega_i)} \sum_{j=0}^{n_i}|p(\xi_{i_j}) - u_h(x_{i_j})|^2. 
\end{equation}

Calculate the partial derivatives of both the polynomial approximated surface function in \eqref{equ:appsurf} and the approximated polynomial function of FEM solution in \eqref{equ:appfem}, 
then we can approximate the tangent gradient which is given in \eqref{equ:newindef} as
\begin{equation}
(G_hu_h)(x_i) = (\partial_1 p_i(0, 0), \partial_2 p_i(0, 0)) 
\begin{pmatrix}
 1 &0  &\partial_1s_i(0, 0) \\
  0 &1 &\partial_2s_i(0, 0) \\
\end{pmatrix}^\dag (\phi_1^i, \phi_2^i, \phi_3^i)^T. 
\end{equation}
To multiply with the orthonormal basis $(\phi_1^i, \phi_2^i, \phi_3^i)$  is because we have to unify the coordinates from local ones to a global one.

Let $\{\chi_i(x_i)\}_{x_i\in\mathcal{M}_h}$ be the nodal basis functions of the surface Crouzeix-Raviart element. Then recovered gradient on the whole domain is 
\begin{equation}
G_hu_h = \sum_{x_i\in\mathcal{M}_h}(G_hu_h)(x_i)\chi_{x_i}(x), \quad \forall x\in \Gamma_h. 
\end{equation}

As a direct application of the gradient recovery method,   we  naturally define a recovery-type {\it a posteriori} error estimator for the surface Crouzeix-Raviart element. 
The local {\it a posteriori} error estimator on each element $T$ is defined as 
\begin{equation}\label{equ:localind}
\eta_{h,T}  =  \|G_h u_h - \nabla_{\Gamma_h} u_h\|_{L^2(T)},
\end{equation}
and the global error estimator as
\begin{equation}\label{equ:ind}
\eta_h = \left( \sum_{T\in\mathcal{T}_h} \eta_{h,T}^2 \right)^{1/2}. 
\end{equation}

\section{Numerical Experiments}
In this section, we present several numerical examples to validate the theoretical results and  test the performance of the recovery-based {\it a posteriori} error estimator. 

To generate an initial mesh on a general surface, we adopt the three-dimensional surface mesh generation module of the Computational Geometry Algorithms Library \cite{cgal}.  Meshes on finer levels are generated by firstly using uniform refinement for the first two numerical examples or the newest bisection \cite{Ch2008}  refinement for the other two numerical examples and then projecting them on to the surface.  In general case, there is no explicit projection map available. We will adopt the first order approximation of projection map as given in \cite{DD2007}.  Therefore the vertices of the meshes are not located on the exact manifold but within a distance of $\mathcal{O}(h^2)$ in our test except for the third numerical example.

For the sake of simplifying the notation, we introduce the following notation for errors
\begin{align*}
&e: =\|u^e - u_h\|_{L^2(\Gamma_h)}, \quad  De: =|u^e - u_h|_{H^1(\Gamma_h; \mathcal{T}_h)},\\ 
&D_ie: =\|\Pi_hu^e - u_h\|_{H^1(\Gamma_h; \mathcal{T}_h)},\quad  D_re: =\|\nabla u^e - G_h u_h\|_{L^2(\Gamma_h)}.
\end{align*}

In the following tables, all convergence rates are listed in term of the degree of freedom(DOF). Noticing $\text{DOF} \approx 1/h^2$ the corresponding convergence rates in term of
the mesh size $h$ is double of what we present in the tables.

\subsection{Numerical example 1} 
In this example,  we consider the model problem \eqref{equ:model} on a general surface firstly introduced by  Dziuk in  \cite{Dz1988}. 
Figure \ref{fig:dmesh} show  the discretized surface and its initial mesh.  The exact solution solution $u(x) = x_1x_2$ and the right hand side 
function $f$ can be computed from $u$.

\begin{figure}[ht]
    \centering
    \includegraphics[width=0.4\textwidth]{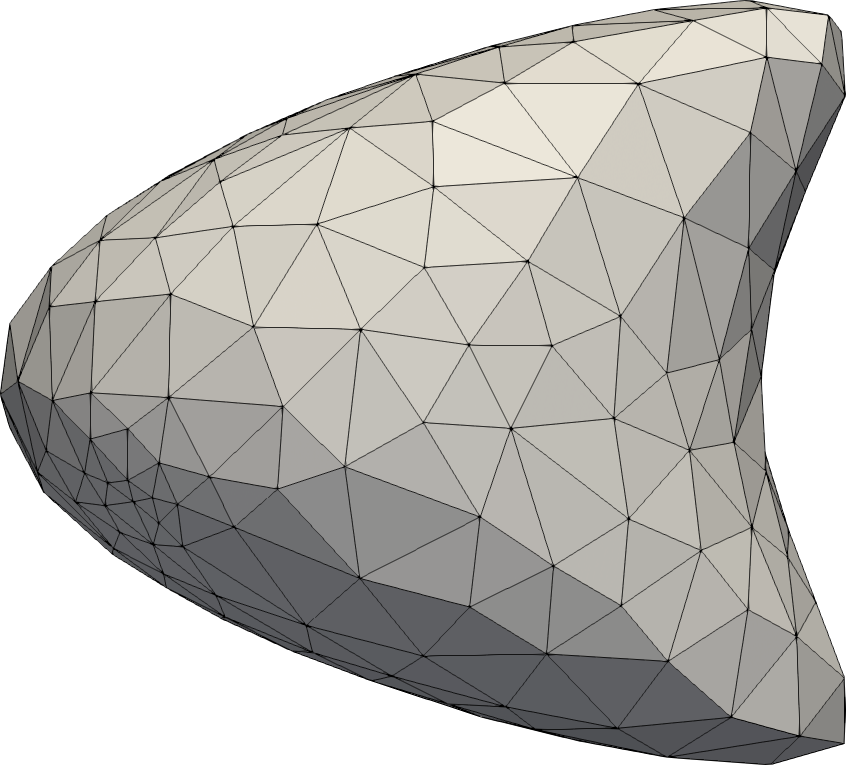}
    \caption{Initial mesh for Dzuik surface}
\label{fig:dmesh}
\end{figure}

We report the numerical results in  Table \ref{tab:dziuk}.   
As predict by the Theorem \ref{thm:l2err} and Theorem \ref{thm:eng},  the $L_2$ error converges at a rate of $\mathcal{O}(h^2)$ 
and the discrete $H^1$ semi-error converges at a rate of $\mathcal{O}(h)$.  Concerning the error between the finite element gradient and the gradient of the interpolation of the exact solution,   $\mathcal{O}(h)$ convergence can be observed.  It means that there is no supercloseness between the finite element gradient and the gradient of the interpolation of the exact solution, which is similar to the numerical results in planar domain \cite{GZ2015}. 
Even though in that case, we can observe $\mathcal{O}(h^{1.9})$ superconvergence for the recovered gradient.

\begin{table}[htb!]
\centering
\footnotesize
\caption{Numerical results for numerical example 1}\label{tab:dziuk}
\begin{tabular}{|c|c|c|c|c|c|c|c|c|c|c|}
\hline 
Dof&$e$&order&$De$&order&$D_ie$&Order&$D_re$&Order\\ \hline\hline 
243&3.70e-02&--&6.95e-01&--&2.10e-01&0.00&3.20e-01&--\\ \hline
966&8.51e-03&1.07&3.66e-01&0.46&1.08e-01&0.48&1.07e-01&0.79\\ \hline
3858&2.19e-03&0.98&1.86e-01&0.49&5.49e-02&0.49&3.06e-02&0.90\\ \hline
15426&5.53e-04&0.99&9.37e-02&0.50&2.77e-02&0.50&8.28e-03&0.94\\ \hline
61698&1.39e-04&1.00&4.69e-02&0.50&1.39e-02&0.50&2.23e-03&0.95\\ \hline
246786&3.47e-05&1.00&2.35e-02&0.50&6.93e-03&0.50&6.15e-04&0.93\\ \hline
\end{tabular}
\end{table}

\subsection{Numerical example 2}
Our second example to consider a general surface with high curvature part as in \cite{DG2017, DE2013}.  
The discretized surface with the initial mesh 
was plotted  in Figure \ref{fig:emesh}. 
We choose $f$ to fit the exact solution $u(x) = x_1x_2$.

\begin{figure}[ht]
    \centering
    \includegraphics[width=0.7\textwidth]{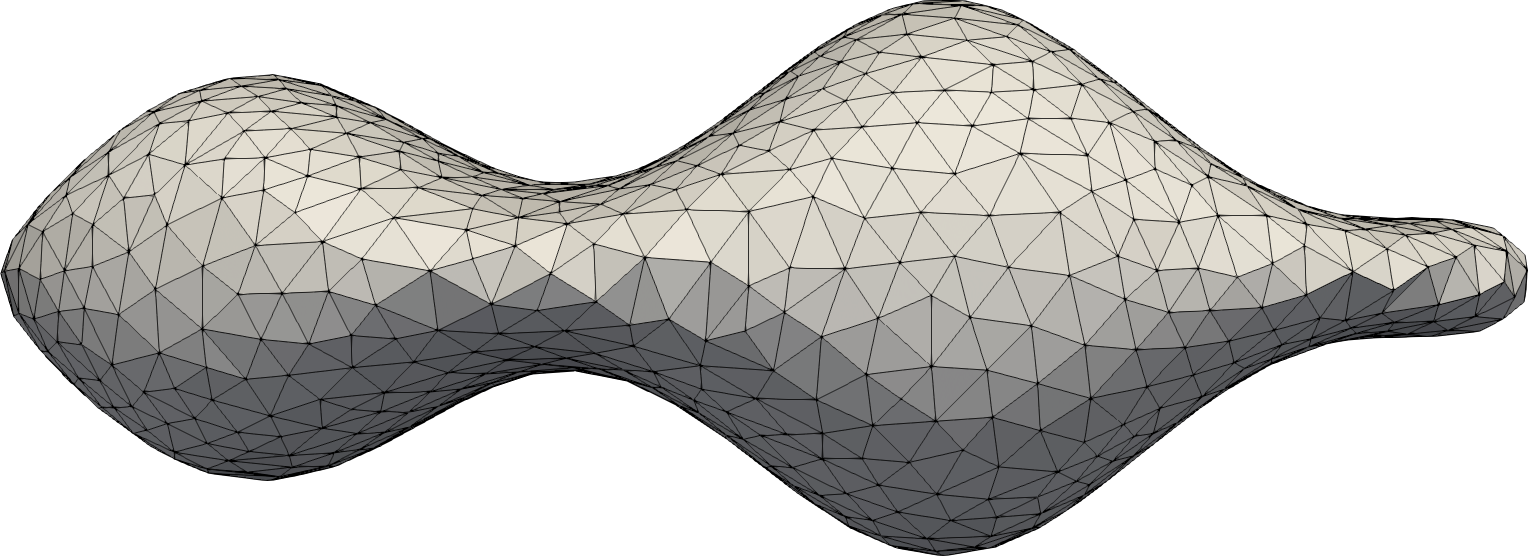}
    \caption{Initial mesh for Elliot surface}
\label{fig:emesh}
\end{figure}

In Table \ref{tab:ell}, we list the history of numerical errors.   We can observe the same optimal convergence results in $L^2$ norm and discrete $H^1$ semi-norm which matches well with the established theoretic results in Section \ref{sec:err}.  Similar to the previous example,  $\mathcal{O}(h^2)$ can be observed even though there is no supercloseness result.

\begin{table}[htb!]
\centering
\footnotesize
\caption{Numerical results for numerical example 2}\label{tab:ell}
\begin{tabular}{|c|c|c|c|c|c|c|c|c|c|c|}
\hline 
Dof&$e$&order&$De$&order&$D_ie$&Order&$D_re$&Order\\ \hline\hline 
1153&8.82e+00&--&2.92e+01&--&2.50e+01&--&1.46e+01&--\\ \hline
4606&5.25e-02&3.70&4.24e-01&3.06&2.26e-01&3.40&2.37e-01&2.98\\ \hline
18418&3.92e-03&1.87&1.86e-01&0.60&5.37e-02&1.04&7.19e-02&0.86\\ \hline
73666&1.08e-03&0.93&9.28e-02&0.50&2.65e-02&0.51&1.98e-02&0.93\\ \hline
294658&2.54e-04&1.05&4.64e-02&0.50&1.32e-02&0.50&5.11e-03&0.98\\ \hline
1178626&6.23e-05&1.01&2.32e-02&0.50&6.59e-03&0.50&1.31e-03&0.98\\ \hline
\end{tabular}
\end{table}

\subsection{Numerical example 3} 
In all the previous numerical examples, the exact solutions are smooth.  In this example, we consider a benchmark problem on the  unit sphere surface with a singular solution.  The solution and the source term in spherical coordinates are given by
\begin{equation*}
u = \sin^{\lambda}(\theta)\sin(\phi), \quad f = (2+\lambda^2 +\lambda)\sin^{\lambda}(\theta)\sin(\phi) + (1-\lambda^2)\sin^{\lambda-2}(\theta)\sin(\phi).
\end{equation*}
It is easy to  show that $u\in H^{1+\lambda}(\Gamma)$. When $\lambda<1$, the solution $u$ has two singularities at north and south poles.

\begin{figure}[h]
   \begin{minipage}[b]{.5\linewidth}
    \centering
   \includegraphics[width=0.8\textwidth]{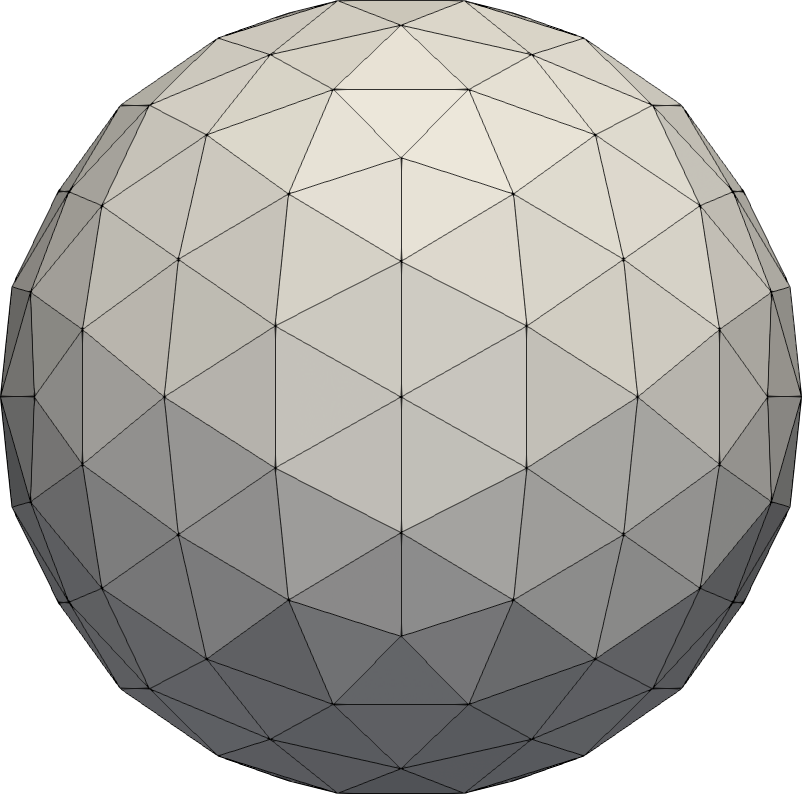}
   \subcaption{} \label{fig:sphere_init}
   \end{minipage}%
   \begin{minipage}[b]{.5\linewidth}
    \centering
   \includegraphics[width=0.8\textwidth]{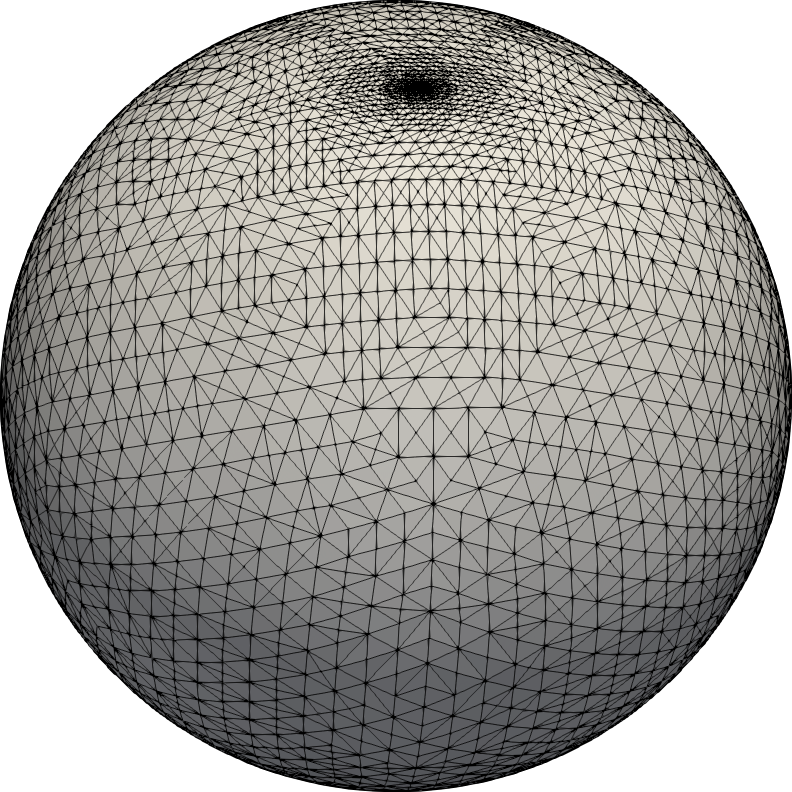}
   \subcaption{}\label{fig:sphere_adaptive}
   \end{minipage}
\caption{Meshes for numerical example 3:  (a)  Initial mesh; (b) Adaptively refined mesh.}
  \label{fig:sphere_mesh}
\end{figure}

To resolve the singularity,  we apply the adaptive finite element method with the recovery based {\it a posteriori} error estimator  \eqref{equ:localind}. 
The initial mesh is icosphere mesh as plotted in Figure \ref{fig:sphere_init}.  Figure \ref{fig:sphere_adaptive} plot the adaptive refined meshes after 14 adaptive refinements.  It obvious that the refinement is  mainly concentrated on the two singular points.  We plot the  errors in Figure \ref{fig:sphere_err}.  The $L^2$ error and discrete $H^1$ semi-error both converges optimally.  The recovered gradient superconverges to the exact gradient at a rate of $\mathcal{O}(h^{1.6})$.

\begin{figure}[h]
   \begin{minipage}[b]{.5\linewidth}
    \centering
   \includegraphics[width=0.8\textwidth]{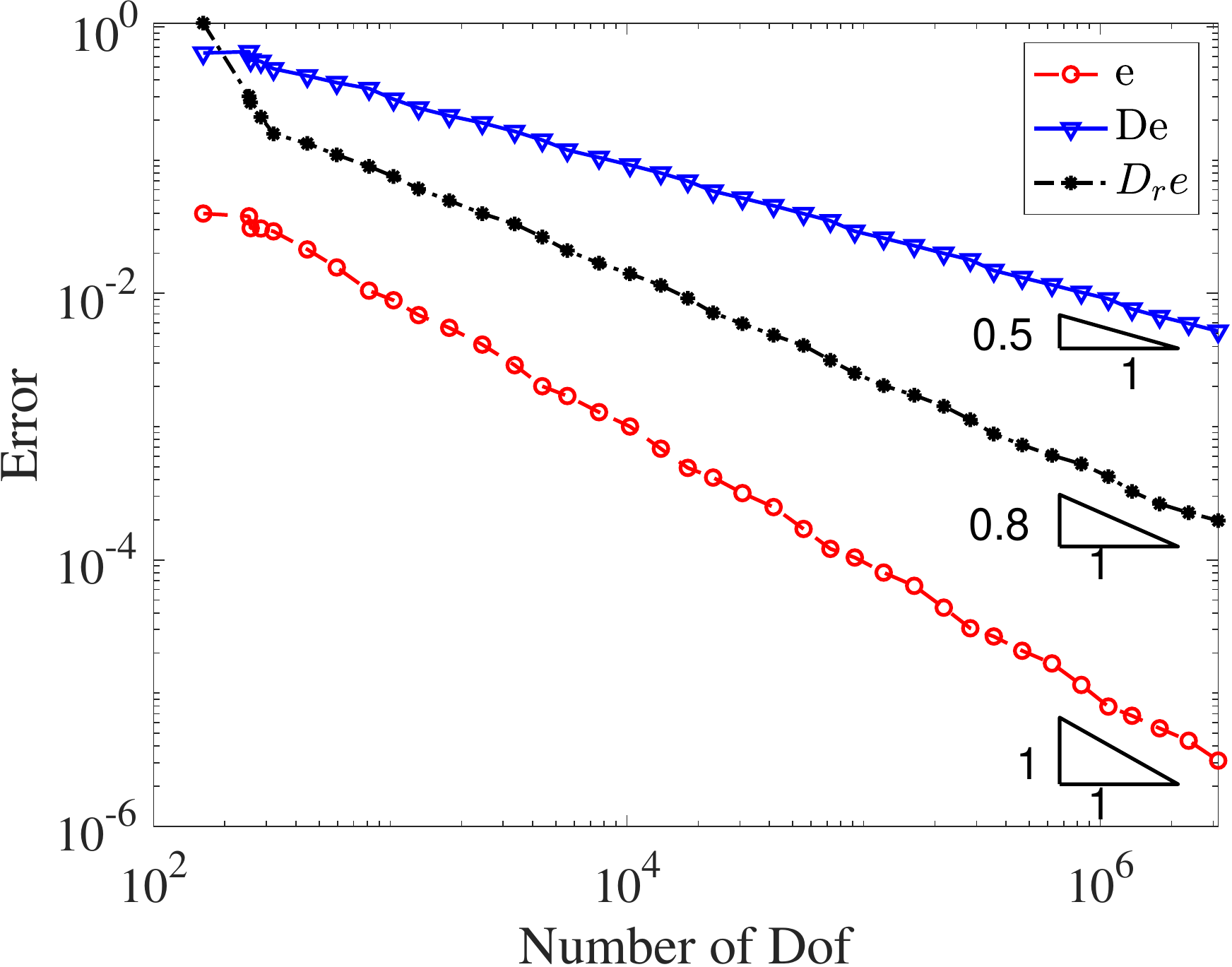}
   \subcaption{} \label{fig:sphere_err}
   \end{minipage}%
   \begin{minipage}[b]{.5\linewidth}
    \centering
   \includegraphics[width=0.8\textwidth]{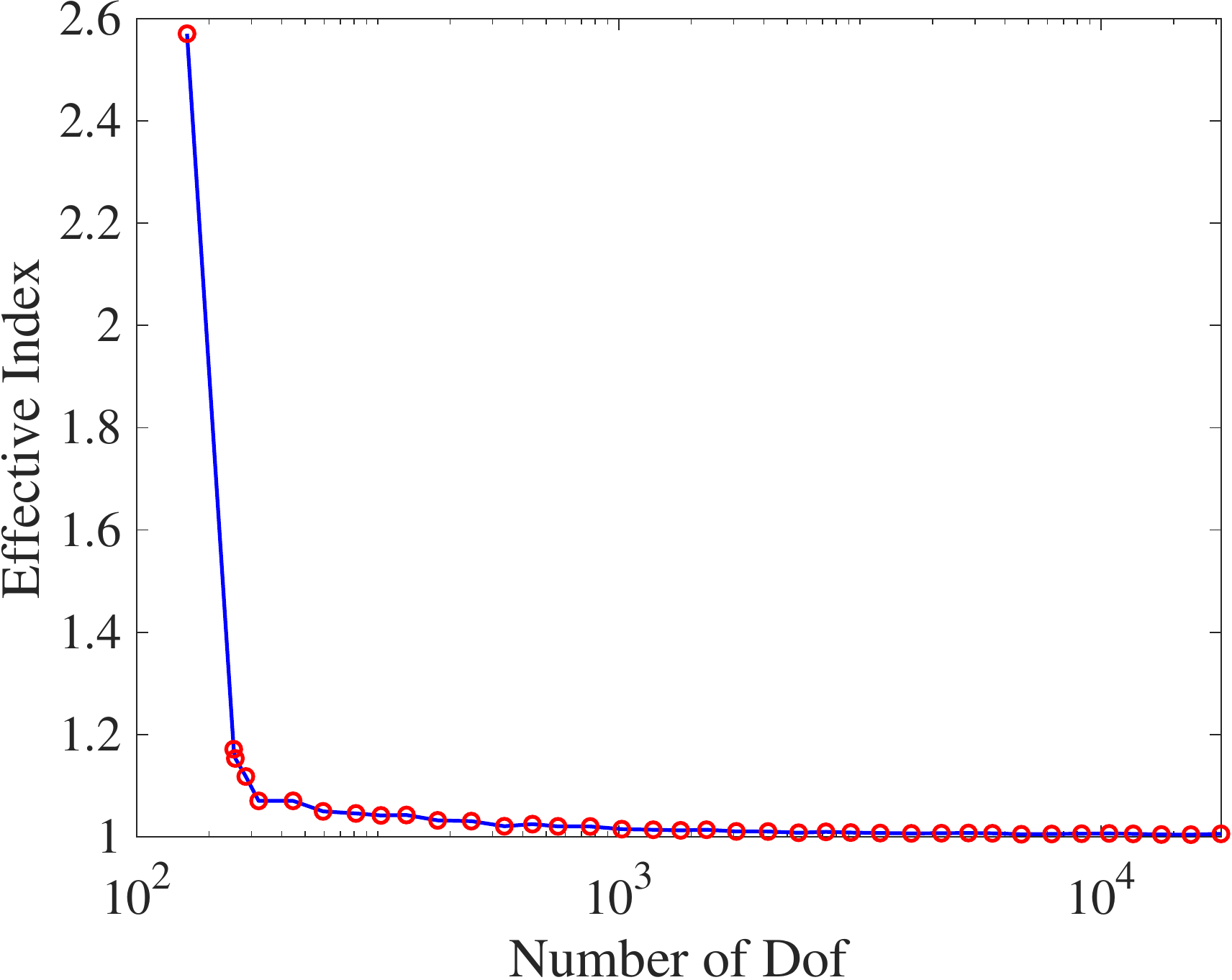}
   \subcaption{}\label{fig:sphere_eff}
   \end{minipage}
\caption{Meshes for numerical example3:  (a)  Initial mesh; (b) Adaptively refined mesh.}
  \label{fig:sphere}
\end{figure}

To quantify   the performance of our new recovery-based \textit{a posterior} error estimator for the Laplace-Beltrami problem, 
the effectivity index $\kappa$ is used to measure the quality of
an error estimator \cite{AO2000,Ba2001}, which  is defined by the ratio between the estimated error and the  exact error

\begin{equation}
    \kappa = \frac{
   \| u_h - \nabla u_h\|_{L^2(\Gamma}}
    {| u - u_h|_{H^1(\Gamma_h; \mathcal{T}_h)}}
    \label{equ:effect}
\end{equation}
The effectivity index is plotted in  Figure \ref{fig:sphere_eff} .
We see that $\kappa$ converges asymptotically to $1$ 
which  indicates the posteriori error estimator 
\eqref{equ:localind}
is asymptotically exact.

\begin{figure}[h]
   \begin{minipage}[b]{.5\linewidth}
    \centering
   \includegraphics[width=0.8\textwidth]{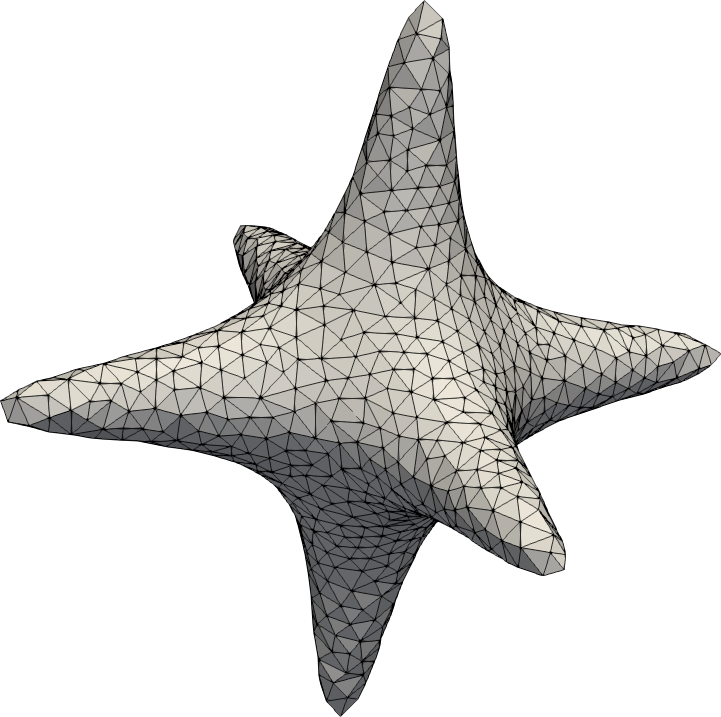}
   \subcaption{} \label{fig:stern_init}
   \end{minipage}%
   \begin{minipage}[b]{.5\linewidth}
    \centering
   \includegraphics[width=0.8\textwidth]{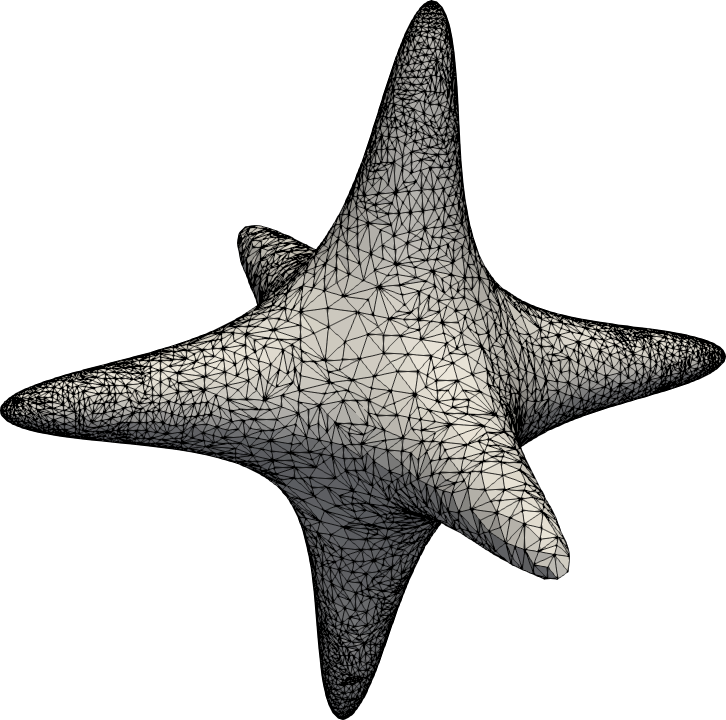}
   \subcaption{}\label{fig:stern_adaptive}
   \end{minipage}
\caption{Meshes for numerical example 4:  (a)  Initial mesh; (b) Adaptively refined mesh.}
  \label{fig:stern_mesh}
\end{figure}

\subsection{Numerical example 4}
This example is taken from \cite{DM2016}.  The surface is the zero level of the following level set function 
\begin{equation*}
\phi(x) = 400(x_1^2x_2^2 + x_2^2x_3^2 + x_1^2x_3^2) - (1-x_1^2-x_2^2-x_3^2)^3-40.
\end{equation*}
The discretized surface on the initial mesh is shown in Figure \ref{fig:stern_init}. What can be clearly seen in this figure is the high curvature parts. The initial mesh fails to resolve them.   In contrast, the high curvature parts are well captured by the adaptively refined mesh as plotted in Figure \ref{fig:stern_adaptive}.

\begin{figure}[h]
   \begin{minipage}[b]{.5\linewidth}
    \centering
   \includegraphics[width=0.8\textwidth]{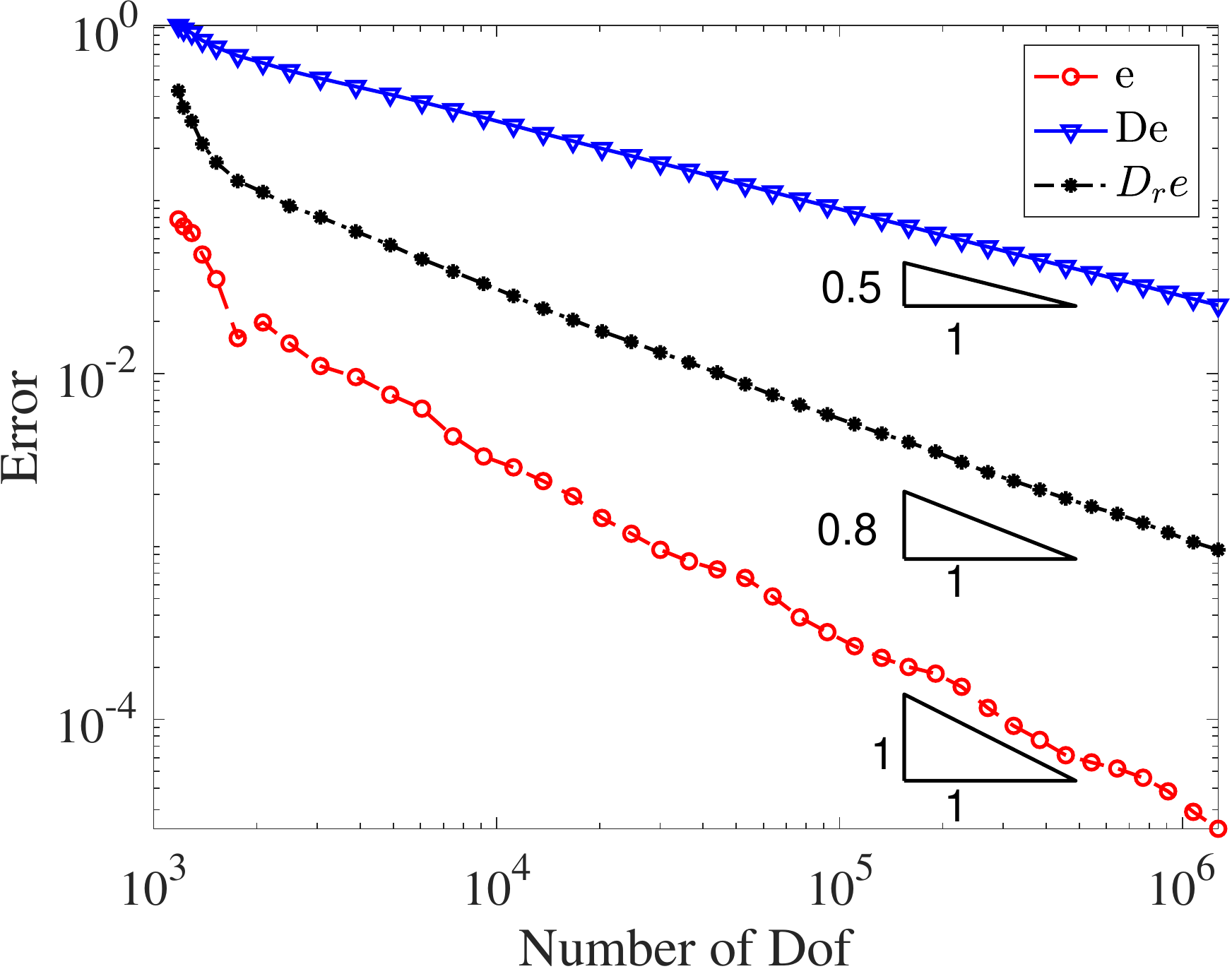}
   \subcaption{} \label{fig:stern_err}
   \end{minipage}%
   \begin{minipage}[b]{.5\linewidth}
    \centering
   \includegraphics[width=0.8\textwidth]{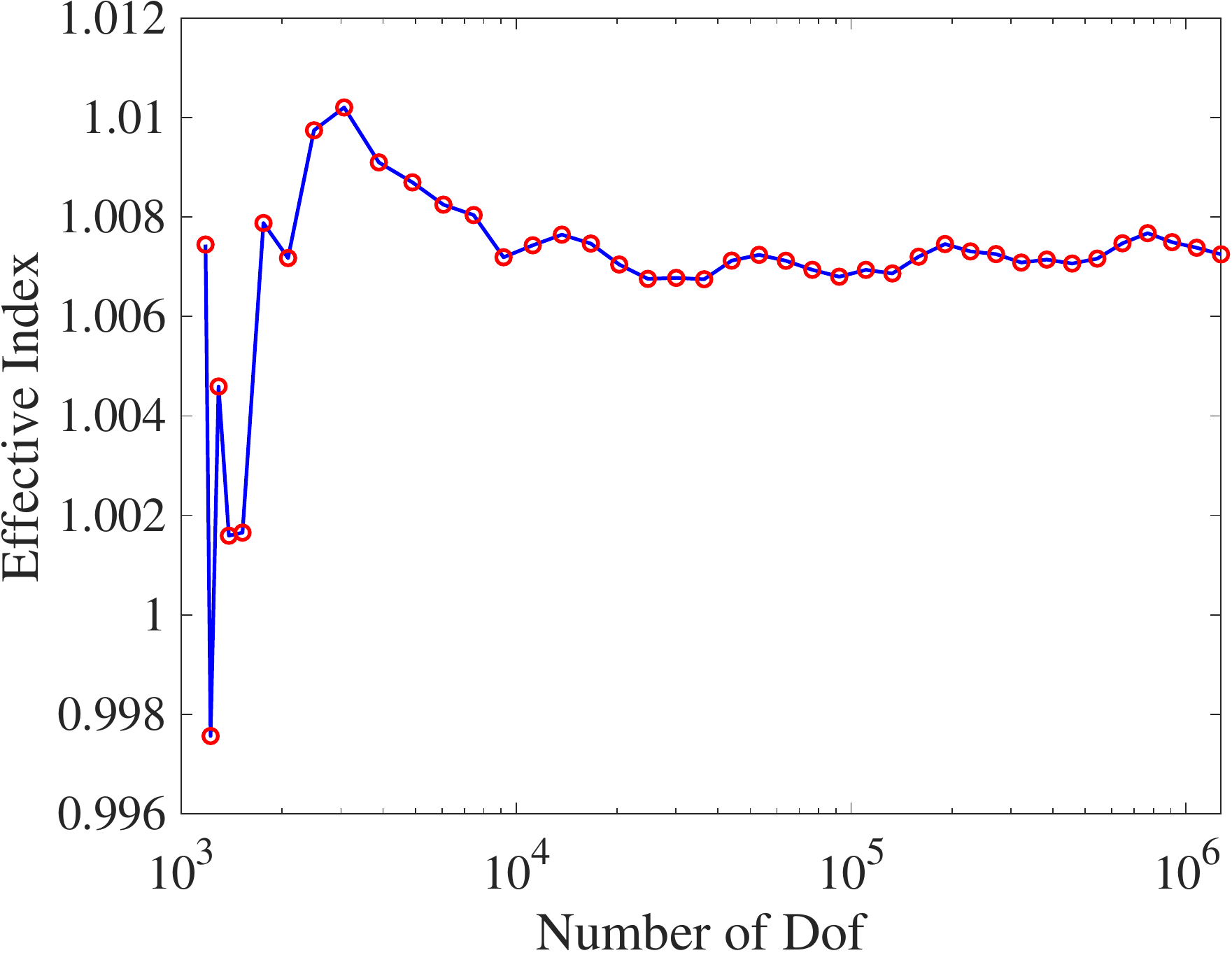}
   \subcaption{}\label{fig:stern_eff}
   \end{minipage}
\caption{Meshes for numerical example 4:  (a)  Initial mesh; (b) Adaptively refined mesh.}
  \label{fig:stern}
\end{figure}

Figure \ref{fig:stern_err} plot errors in term of degrees of freedom.    The figure shows the optimal decay of the $L^2$ error and discrete $H^1$ semi-error.   In addition, we observe that the recovered gradient error superconverges at a rate of $\mathcal{O}(h^{1.6})$.   In Figure \ref{fig:stern_eff}, we graph the effectivity index $\kappa$. Figure 2  reveals that the effectivity index is close to $1$ after several refinements.  It illustrates that the recovery-type {\it a posteriori} error estimator \eqref{equ:localind} is asymptotically exact.

\section{Conclusion}
In this paper, we have introduced and analyzed the Crouzeix-Raviart element on a surface setting.  The surface  Crouzeix-Raviart element
is a nonconforming element in the sense that it is only continuous at edge centers.  The optimal convergence theory has also been established using a delicate argument.  In addition,  we have proposed a superconvergent gradient recovery for the surface Crouzeix-Raviart element.  The proposed post-processing procedure is numerical proven to be able to provide a more accurate approximate gradient and asymptotically exact {\it a posteriori} error estimator. 

Ongoing research topics include using a residue-type a posterior error estimate to conduct medius error analysis \cite{Br2014} and applying it to investigate surface Stokes problems and surface Naiver-Stokes problems. 
\bibliographystyle{siam}
\bibliography{mybibfile}
\end{document}